\def\aa{{\bf a}}
\def\bb{{\bf b}}
\def\ee{{\bf e}}
\def\TT{\mathbb{T}}
\def\d{\displaystyle}
\def\h{ {\cal H} }
\def\a{ {\cal A} }
\def\l{ {\cal L} }
\def\b{ {\cal B} }
\def\u{ {\cal U} }
\def\t{ {\cal T} }
\def\s{ {\cal S} }
\def\p{ {\cal P} }
\def\k{ {\cal K} }
\def\sch{ {\bf  Sd} }
\def\ind{ \hbox{ind} }
\newtheorem{teo}{Theorem}[section]
\newtheorem{prop}[teo]{Proposition}
\newtheorem{lem}[teo]{Lemma}
\newtheorem{coro}[teo]{Corollary}
\theoremstyle{definition}
\newtheorem{rem}[teo]{Remark}
\newtheorem{ejem}[teo]{Example}
\newtheorem{ejems}[teo]{Examples}
\title{Unitary operators with decomposable corners}
\author{Esteban Andruchow}
\begin{document}

\maketitle 

\begin{abstract}
We study pairs $(U,\l_0)$, where $U$ is a unitary operator in $\h$ and $\l_0\subset \h$ is a closed subspace, such that
$$
P_{\l_0}U|_{\l_0}:\l_0\to\l_0
$$
has a singular value decomposition. Abstract characterizations of this condition are given, as well as relations to the geometry of projections and pairs of projections. Several concrete examples are examined. 
\end{abstract}

\bigskip

{\bf 2010 MSC:}  47AXX, 47A20, 47B35.

{\bf Keywords:}  Unitary operator, closed subspace, singular value decomposition.

\section{Introduction}
In this paper we consider pairings $(U,\l_0)$ of a unitary operator $U$ in a Hilbert space $\h$ and a closed subspace $\l_0\subset \h$ such that
$$
P_{\l_0} U|_{\l_0}:\l_0\to\l_0
$$
admits a singular value decomposition  (or shortly, is $S$-decomposable, meaning {\it Schmidt decomposable}). Here $P_{\l_0}$ denotes the orthogonal projection onto $\l_0$. Note  that this condition is equivalent  to say that $P_{\l_0}UP_{\l_0}$ is $S$-decomposable.  A typical case of this situation occurs when $\l_0$ is an invariant subspace for $U$: in this case $P_{\l_0}U|_{\l_0}=U|_{\l_0}$ is an isometry. 

There is a spatial characterization of this condition (see Corollary \ref{equivalentes} below): $P_{\l_0}U|_{\l_0}$  is $S$-decomposable  if and only if there exist bi-orthonormal bases  of $\l_0$ and  $U\l_0$, i.e.,  bases $\{f_n: n\ge 1\}$ of $\l_0$ and $\{g_n: n\ge 1\}$ of  $U\l_0$ such that $\langle f_n, g_m\rangle=0$ if $n\ne m$.

The problem is related to the characterization of pairs of projections $P,Q$ such that $PQ$ is $S$-decomposable, or equivalently, $PQP$ is diagonalizable. Indeed, $P_{\l_0}UP_{\l_0}$ is $S$-decomposable if and only if $P_{\l_0}(UP_{\l_0}U^*)$ is $S$-decomposable.

We shall establish characterizations and abstract results concerning these pairings $(U,\l_0)$:
\begin{itemize}
\item
Relations with the geometry of the Grassmann manifold of $\h$: when  does the exponential map of the manifold $e^{iZ}\l_0$  at a base point $\l_0$ give rise to a  $S$-decomposable operator $P_{\l_0} e^{iZ}|_{\l_0}$ (Section 5). 
\item
Symmetries $U$ (i.e. $U^*=U^{-1}=U$) which have this property with respect to $\l_0$. In particular, symmetries which arise from non-orthogonal projections (Section 6).
\item
The relationship with diagonalizable dilations (Section 7).
\end{itemize}
But also our interest will be in several concrete examples:
\begin{itemize}
\item
Multiplication by continuous unimodular functions in $\h=L^2(\mathbb{T})$ and $\l_0=H^2(\mathbb{T})$. 
\item
$\h=L^2(\mathbb{R})$, $U$ the Fourier-Plancherel transform and $\l_0=L^2(I)$, where $I$ is an interval or the half line.
\item 
$\h=\ell^2(\mathbb{Z})$ and $U=S$ the bilateral shift, $\l_0\subset\ell^2(\mathbb{Z})$ a closed subspace.  
\end{itemize}

The contents of the paper are the th following:

In Section 2 we recall preliminaries and establish basic properties. We denote the fact that $P_{\l_0}U|_{\l_0}$ is $S$-decomposable by writing, equivalently,
$$
U\in\sch_{\l_0} \ \hbox{ or } \ \l_0\in\sch^U,
$$
depending on the standpoint. We also introduce the main examples.

In Section 3 we fix the unitary operator and consider properties of the subspaces $\l\in\sch^U$. For instance, we show that $\sch^U$ is closed for the operation of taking orthogonal supplements, but the orthogonal sum of two subspaces in $\sch^U$ may fail to remain in $\sch^U$.

In Section 4 we give another equivalent condition for $P_{\l_0}U|_{\l_0}$ to be $S$-decomposable in terms of commutators.

In Section 5 we study the relation of this condition with the geometry of the Grassmann manifold of $\h $; specifically,  with the geodesics and exponential map of this manifold.

A non orthogonal projection $Q$, via the polar decomposition
$$
2Q-1=\rho_Q |2Q-1|
$$
gives rise to a symmetry $\rho_Q$ (see \cite{cpr}). We characterize when $P_{R(Q)} \rho_Q|_{R(Q)}$ is $S$-decomposable. This is done in Section 6.

In Section 7 we characterize contractions which are $S$-decomposable, in terms of diagonalization properties of their unitary dilations.

In Section 8 we return to the example $U=M_\varphi$ for $\varphi$ a continuous unimodular function in $\mathbb{T}$, $\h=L^2(\mathbb{T}$ and $\l_0=H^2(\mathbb{T})$. This is an $S$-decomposable pairing: here 
$P_{\l_0}U|_{\l_0}$ is the Toeplitz operator $T_\varphi$, which has a singular value decomposition: it gives rise to a sequence (of singular values) which converges to $1$. We think that this is an interesting fact that needs to be studied. We merely scratch the surface here, examining the case when $\varphi$ is a quotient of finite Blaschke products.

Let us finish this introduction by recalling  the Halmos decomposition of $\h$ relative to a pair of projections / subspaces.
Given projections $P$ and $Q$, put 
$$
\h_{11}=R(P)\cap R(Q) , \ \h_{00}=N(P)\cap N(Q) , \ \h_{10}=R(P)\cap N(Q) , \ \h_{01}=N(P)\cap R(Q) ,
$$
and 
$$
\h'=\{\h_{11}\oplus\h_{00}\oplus\h_{10}\oplus\h_{01}\}^\perp.
$$
The last subspace is usually called the {generic} part of $P$ and $Q$.
Clearly these five subspaces reduce simultaneously $P$ and $Q$. 
For the generic part, in \cite{halmos} Halmos proved that there exists a
unitary isomorphism $\h'\simeq\l\times\l$ such that in this product space,
the reductions $P'$ and $Q'$ of  $P$ and $Q$ to $\l\times \l$ are of the form
$$
P'=\left(\begin{array}{cc} 1 & 0 \\ 0 & 0 \end{array} \right) \ \hbox{ and }  \ \ Q'=\left(\begin{array}{cc} C^2 & CS \\ CS & S^2 \end{array} \right), 
$$
where $C=\cos(X)\ge 0 $ and $S=\sin(X)\ge0$ for $\pi/2\ge X\ge 0$; the three operators have trivial nullspaces, and clearly commute.    
\section{Preliminaries}
Let $\h$ be a Hilbert space, and $\l_0\subset\h$ a closed subspace.
Denote by $P_0=P_{\l_0}$ the orthogonal projection onto $\l_0$.
Let $\u(\h)$ be the unitary group of $\h$.
An operator $T$ acting in a Hilbert space $\h$ is said to be
{\it Schmidt decomposable} in $\h$, if it has a singular value decomposition:
there exist orthonormal system $\{f_n: n\ge 1\}$ and $\{g_n: n\ge 1\}$,
and positive numbers $s_n=s_n(T)$ such that
$$
T=\sum_{n\ge 1} s_n f_n\otimes g_n,
$$
where, as is usual notation, $f\otimes g$ is the rank one operator
$f\otimes g (h)=\langle h, g \rangle f$.
In this note we study the set 
$$
\sch_{\l_0}:=\{U\in\u(\h): P_0 U |_{\l_0} \hbox{ is Schmidt decomposable in } \l_0\}.
$$ 

Note that $U\in\sch_{\l_0}$ if and only if $P_0UP_0$ is Schmidt decomposable.

Let us recall the following facts on Schmidt decomposable products of projections,
taken from \cite{pqvsp-q}.

\begin{prop} {\rm (\cite{pqvsp-q})}\label{pq}
Let $P,Q$ be orthogonal projections.
\begin{enumerate}
\item\label{itempq}
$PQ$ is Schmidt decomposable if and only if there exist orthonormal
bases
$\{\psi_n: n \geq 1\}$ of $R(P)$ and 
$\{\xi_n : n \geq 1\}$ of $R(Q)$ such that 
$\langle \xi_n,\psi_k\rangle=0$ if $n\ne k$.
In that case, $PQ = \d\sum_{n \geq1} s_n \psi_n \otimes \xi_n$
where $s_n = \langle \xi_n, \psi_n \rangle$ are the singular values of $PQ$.
Moreover, $s_n \leq 1$ and for all $n$ such that $s_n = 1$ the
associated vectors $\xi_n$ and $\psi_n$ verify that $\xi_n = \psi_n$
and generate $R(P) \cap R(Q)$.
\item
$PQ$ is Schmidt decomposable if and only if $P-Q$ is diagonalizable.
In that case, if $s_n$ are the singular values of $PQ$, then
the eigenvalues of $P-Q$ are $\pm (1-s_{n}^{2})^{1/2}$, $n\geq 1$,
and eventually, $0, -1$ and $1$.
\item
$PQ$ is Schmidt decomposable if and only if $QP$, or $P^\perp Q$,
or $PQ^\perp$ or $P^\perp Q^\perp$ are Schmidt decomposable.
Moreover, the singular values $s_n$ of $PQ$ and $t_n$ of $PQ^\perp$,
such that $s_n,t_n<1$,  are related by
$$
t_n=\sqrt{1-s_n^2},
$$
with the same multiplicity. 
In particular, $PQ$ and $P^{\perp} Q^{\perp}$ have the same singular values
(which are strictly less than one), with the same multiplicity.

\end{enumerate}
\end{prop}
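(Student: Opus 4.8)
The plan is to reduce everything to the positive contraction $PQP$ and to exploit the Halmos decomposition recalled above. The starting observation is that, since $Q^2=Q$,
$$
(PQ)(PQ)^*=PQ\cdot QP=PQP,
$$
so that $PQ$ is Schmidt decomposable in $\h$ if and only if $PQP|_{R(P)}$ is diagonalizable, i.e.\ admits an orthonormal basis of eigenvectors in $R(P)$. Indeed, if $PQ=\sum_n s_n\,\psi_n\otimes\xi_n$ is a singular value decomposition, a direct computation using $\langle\xi_n,\xi_m\rangle=\delta_{nm}$ gives $PQP=\sum_n s_n^2\,\psi_n\otimes\psi_n$, so the $\psi_n$ diagonalize $PQP|_{R(P)}$ with eigenvalues $s_n^2$.

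For item (1), the substantial direction starts from a diagonalization $PQP=\sum_n s_n^2\,\psi_n\otimes\psi_n$ on $R(P)$, where $\{\psi_n\}$ is an orthonormal basis of $R(P)$ and $s_n\ge 0$. For the indices with $s_n>0$ I would set $\xi_n:=s_n^{-1}Q\psi_n\in R(Q)$. Using $P\psi_n=\psi_n$ and $\langle\psi_n,PQP\psi_m\rangle=s_m^2\delta_{nm}$ one checks that $\{\xi_n\}$ is orthonormal and that $\langle\xi_n,\psi_k\rangle=s_n^{-1}s_k^2\,\delta_{nk}$; this is exactly the required bi-orthonormality, together with $\langle\xi_n,\psi_n\rangle=s_n$ and $P\xi_n=s_n\psi_n$. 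Since any $v\in R(Q)$ orthogonal to all $\xi_n$ satisfies $Pv=0$, the part of $Qh$ not expanded by the $\xi_n$ is annihilated by $P$, and one recovers $PQ=\sum_n s_n\,\psi_n\otimes\xi_n$. The converse is immediate once the systems are completed to bases. Finally $s_n\le 1$ is Cauchy--Schwarz, and $s_n=1$ forces $\|Q\psi_n\|=\|\psi_n\|$, hence $\psi_n\in R(Q)$ and $\psi_n=\xi_n\in R(P)\cap R(Q)$.

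For items (2) and (3) I would pass to the Halmos model of $(P,Q)$. On the four subspaces $\h_{11},\h_{00},\h_{10},\h_{01}$ the operators $PQ$ and $P-Q$ are scalar, contributing the eventual singular value $1$ and the eventual eigenvalues $0,1,-1$; thus only the generic part is nontrivial, where
$$
P=\left(\begin{array}{cc} 1 & 0 \\ 0 & 0 \end{array}\right), \quad Q=\left(\begin{array}{cc} C^2 & CS \\ CS & S^2 \end{array}\right), \quad C=\cos X,\ S=\sin X .
$$
A direct computation gives
$$
PQP=\left(\begin{array}{cc} C^2 & 0 \\ 0 & 0 \end{array}\right), \quad PQ^\perp P=\left(\begin{array}{cc} S^2 & 0 \\ 0 & 0 \end{array}\right), \quad (P-Q)^2=\left(\begin{array}{cc} S^2 & 0 \\ 0 & S^2 \end{array}\right).
$$
Hence the singular values of $PQ$ are the eigenvalues of $C$, those of $PQ^\perp$ the eigenvalues of $S=\sqrt{1-C^2}$ (yielding $t_n=\sqrt{1-s_n^2}$), and the eigenvalues of $P-Q$ are $\pm S=\pm\sqrt{1-s_n^2}$. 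Because $C^2$, $S^2=1-C^2$ and $X$ have the same spectral projections, each is diagonalizable precisely when the others are, which gives all the equivalences with the stated multiplicities; the cases $QP$, $P^\perp Q$ and $P^\perp Q^\perp$ follow by the symmetries $P\leftrightarrow Q$ and $P\leftrightarrow P^\perp$, $Q\leftrightarrow Q^\perp$.

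The main obstacle I anticipate is not any single identity but the bookkeeping required to turn the bi-orthonormal systems into genuine bases and to match multiplicities in item (3): one must treat separately the eigenvectors of $PQP$ with eigenvalue $0$ (lying in $R(P)\cap N(Q)$), the vectors of $R(Q)$ that fall into $N(P)$, and the $s_n=1$ contribution from $\h_{11}$, and then glue these pieces with the generic part so that $\{\psi_n\}$ and $\{\xi_n\}$ exhaust $R(P)$ and $R(Q)$. A secondary point needing care is that Schmidt decomposability is strictly stronger than boundedness: it requires $PQP|_{R(P)}$ to have pure point spectrum, so ``diagonalizable'' in item (2) must be read as the absence of any continuous spectrum, which is precisely the obstruction the equivalence is designed to detect.
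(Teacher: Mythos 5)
The paper does not actually prove Proposition \ref{pq}: it is quoted from \cite{pqvsp-q}, and the closest argument in the text is the proof of Theorem \ref{conmpq}, which (following the ideas of \cite{pqvsp-q}) produces eigenvectors explicitly as linear combinations of the Schmidt vectors $\psi_k,\xi_k$. Your proof is correct, but for items (2) and (3) it takes a genuinely different route: where the reference works directly on the two-dimensional subspaces spanned by each pair $(\psi_k,\xi_k)$, you push everything into the Halmos model and read off
$$
PQP=\left(\begin{array}{cc} C^2 & 0 \\ 0 & 0\end{array}\right),\qquad PQ^{\perp}P=\left(\begin{array}{cc} S^2 & 0 \\ 0 & 0\end{array}\right),\qquad (P-Q)^2=\left(\begin{array}{cc} S^2 & 0 \\ 0 & S^2\end{array}\right)
$$
on the generic part. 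This buys items (2), (3), the relation $t_n=\sqrt{1-s_n^2}$ and the multiplicity statements in one stroke, at the price of invoking Halmos's structure theorem and of two small steps you leave implicit: that for selfadjoint $A$ diagonalizability of $A^2$ implies that of $A$ (restrict $A$ to each eigenspace of $A^2$, where it is a multiple of a symmetry), and that on the span of a joint eigenvector of $C$ and $S$ with $C\varphi=c\varphi$, $S\varphi=\sigma\varphi$, the block $\left(\begin{array}{cc}\sigma^2 & -c\sigma\\ -c\sigma & -\sigma^2\end{array}\right)$ has eigenvalues $\pm\sigma$, which is what identifies the spectrum of $P-Q$ as $\pm\sqrt{1-s_n^2}$ together with $0,\pm1$ from the non-generic summands. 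Your item (1), via $\xi_n=s_n^{-1}Q\psi_n$ and the observation that $R(Q)\ominus\overline{\mathrm{span}}\{\xi_n\}=R(Q)\cap N(P)$, is the standard argument and agrees with the reference. None of the points you flag as bookkeeping (completion to bases, gluing generic and non-generic parts, restricting to singular values strictly less than one when matching $PQ$ with $P^{\perp}Q^{\perp}$) is a real gap.
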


These facts have the following immediate consequences:

\begin{coro}\label{equivalentes}
Let $U\in\u(\h)$. The following are equivalent:
\begin{enumerate}
\item
$U\in \sch_{\l_0}$.
\item
$WP_0 U P_0V^*$ is Schmidt decomposable, for all
$V,W\in\u(\h)$.

In particular, $U\in\sch_{\l_0}$ if and only if $P_0UP_0U^*$ is Schmidt decomposable,
and this latter operator is a product of projections.
\item
The commutator $[P_0,U]=P_0U-UP_0$ is Schmidt decomposable.
\item
$U\in\sch_{\l_0^\perp}$.
\item
There exist orthonormal basis $\{f_n\}$ and $\{f'_n\}$  of $\l_0$ such that
$$
\langle f_n,Uf'_m\rangle=0 \ \ \hbox{ if } n\ne m.
$$ 
\end{enumerate}
In this case, the singular values of $P_0U|_{\l_0}$ are $s_n=|\langle f_n,Uf'_n\rangle|$,
and the singular values of $[P_0,U]$ are the absolute values of the
eigenvalues of $P_0 - U P_0 U^{\ast}$.
\end{coro}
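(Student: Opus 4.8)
The plan is to reduce all five statements to a single condition on the product of the two orthogonal projections $P_0=P_{\l_0}$ and $Q_0:=UP_0U^*=P_{U\l_0}$, and then read off the equivalences directly from Proposition \ref{pq}. Two elementary facts do all the work: first, a bounded operator $T$ is Schmidt decomposable if and only if $T^*T$ (equivalently $TT^*$) is diagonalizable, since from a diagonalization $T^*T=\sum \mu_n\, g_n\otimes g_n$ one recovers the singular vectors $f_n=\mu_n^{-1/2}Tg_n$; and second, Schmidt decomposability is invariant under left/right multiplication by unitaries, with the singular values unchanged, because $WTV^*=\sum_n s_n\,(Wf_n)\otimes(Vg_n)$ and $\{Wf_n\},\{Vg_n\}$ are again orthonormal.

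For $(1)\Leftrightarrow(2)$ I would first record the identity
\[
(P_0UP_0)(P_0UP_0)^*=P_0UP_0U^*P_0=P_0Q_0P_0=(P_0Q_0)(P_0Q_0)^*,
\]
so $P_0UP_0$ and $P_0Q_0=P_0UP_0U^*$ have the same positive part $P_0Q_0P_0$ and are therefore simultaneously Schmidt decomposable; this is exactly the ``in particular'' clause of (2), and it displays $P_0UP_0U^*$ as the product $P_0Q_0$ of two orthogonal projections. The full statement of (2) then follows from unitary invariance, with (1) recovered by taking $W=V=I$. For $(1)\Leftrightarrow(4)$, the same computation with $P_0$ replaced by $P_0^\perp$, together with $UP_0^\perp U^*=Q_0^\perp$, shows that $P_0^\perp UP_0^\perp$ is Schmidt decomposable iff $P_0^\perp Q_0^\perp$ is, and the third item of Proposition \ref{pq} equates the Schmidt decomposability of $P_0Q_0$ and of $P_0^\perp Q_0^\perp$.

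The commutator condition $(1)\Leftrightarrow(3)$ rests on the identity $[P_0,U]=(P_0-UP_0U^*)U=(P_0-Q_0)U$. Since $U$ is unitary, $[P_0,U]$ is Schmidt decomposable iff the self-adjoint operator $P_0-Q_0$ is, and for a self-adjoint operator Schmidt decomposability coincides with diagonalizability. The second item of Proposition \ref{pq} then equates diagonalizability of $P_0-Q_0=P_0-UP_0U^*$ with Schmidt decomposability of $P_0Q_0$, closing the loop. The same identity yields the final assertion about $[P_0,U]$: right multiplication by $U$ preserves singular values, and the singular values of the self-adjoint $P_0-UP_0U^*$ are precisely the absolute values of its eigenvalues.

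The equivalence $(1)\Leftrightarrow(5)$ is where the only genuine care is needed, because it asks for orthonormal \emph{bases} of $\l_0$, not the orthonormal \emph{systems} that a bare singular value decomposition provides. Writing $T=P_0U|_{\l_0}$ and using $\langle f_n,Uf'_m\rangle=\langle f_n,Tf'_m\rangle$ for vectors in $\l_0$, condition (5) says exactly that $T$ is diagonal with respect to the input basis $\{f'_m\}$ and the output basis $\{f_n\}$. Starting from an SVD $T=\sum s_n f_n\otimes g_n$ I would complete $\{g_n\}$ to a basis of $\l_0$ by adjoining an orthonormal basis of $N(T)$, and $\{f_n\}$ by adjoining one of $N(T^*)$; keeping the original singular pairs at matching indices makes every off-diagonal inner product vanish (the surviving value is $\langle f_n,Tf'_m\rangle=s_m\delta_{nm}$ on the range, and $0$ whenever $f'_m\in N(T)$ or $f_n\in N(T^*)$), so the two kernels need not have equal dimension. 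Conversely, expanding in such bases gives $T=\sum_m\beta_m\, f_m\otimes f'_m$ with $\beta_m=\langle Tf'_m,f_m\rangle$, and absorbing the phase of each $\beta_m$ into $f_m$ produces an SVD with singular values $s_n=|\beta_n|=|\langle f_n,Uf'_n\rangle|$, which is the stated formula. The main obstacle throughout is precisely this bookkeeping with kernels and indexing in (5); the other four equivalences are direct applications of Proposition \ref{pq}.
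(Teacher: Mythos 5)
Your treatment of items (1)--(4) and of the two singular-value formulas is correct and is essentially the argument the paper intends: everything is funnelled through the pair of orthogonal projections $P_0$ and $Q_0=UP_0U^*$ (via $(P_0UP_0)(P_0UP_0)^*=P_0Q_0P_0$, the identity $[P_0,U]=(P_0-Q_0)U$, and unitary invariance of singular values), after which Proposition \ref{pq} applies verbatim. The paper gives no further detail, so there is nothing to compare beyond this reduction, which you carry out cleanly.

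The gap is in $(1)\Rightarrow(5)$, and it sits precisely in your parenthetical claim that ``the two kernels need not have equal dimension.'' They must. If $\{f_n\}_{n\in I}$ and $\{f'_n\}_{n\in I}$ are orthonormal bases of $\l_0$ indexed by the \emph{same} set with $\langle f_n,Tf'_m\rangle=0$ for $n\ne m$ (where $T=P_0U|_{\l_0}$), then $Tf'_m=\beta_m f_m$ with $\beta_m=\langle Tf'_m,f_m\rangle$, hence $N(T)=\overline{\mathrm{span}}\{f'_m:\beta_m=0\}$ and $N(T^*)=\overline{\mathrm{span}}\{f_m:\beta_m=0\}$ have the same dimension. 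In your construction this surfaces as follows: the singular pairs must occupy matching indices (otherwise some off-diagonal entry equals $s_j\ne 0$), so the leftover indices must simultaneously carry an orthonormal basis of $N(T)$ and one of $N(T^*)$, which is impossible when these dimensions differ. And they can differ while (1) holds: take $\h=\ell^2(\mathbb{Z})$, $U=S$ the bilateral shift, and $\l_0=\overline{\mathrm{span}}\{\ee_n:n\ge 0\}$. Then $T$ is the unilateral shift, an isometry, so $U\in\sch_{\l_0}$; but if bases as in (5) existed, the computation above would give $Sf'_m=c_mf_m$ with $|c_m|=1$, forcing the basis $\{f_m\}$ of $\l_0$ to lie in the proper closed subspace $S\l_0$ --- a contradiction. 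So $(1)\Rightarrow(5)$ is not merely unproved by your argument; it fails as stated, and holds exactly when $\dim(\l_0\cap U\l_0^\perp)=\dim(\l_0^\perp\cap U\l_0)$, i.e.\ when the pair $(P_0,Q_0)$ has index zero. The defect is inherited from item 1 of Proposition \ref{pq} as quoted, but your proof should either impose this hypothesis or weaken ``bases'' to ``orthonormal systems.''
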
 

\begin{rem}
Note that $U\in\sch_{\l_0}$ implies that also
$U^*\in\sch_{\l_0}$: if $P_0UP_0=\sum_{n\ge 1} s_n\xi_n\otimes \eta_n$, then $P_0U^*P_0=(P_0UP_0)^*=\sum_{n\ge 1} s_n\eta_n\otimes \xi_n$. Also it is apparent that $I\in\sch_{\l_0}$. But $\sch_{\l_0}$ is not a group, as the following example shows. Let $P,Q$ be projections in a Hilbert space $\l$ such that $PQ$ is not Schmidt decomposable, (see below for an for explicit example). Let $\h=\l\times\l$ and $\l_0=\l\times\{0\}$. Consider in $\h$ the unitary operators
$$
U_P=\left( \begin{array}{cc} P & 1-P \\ 1-P & P \end{array} \right) \ \hbox{ and } \ U_Q=\left( \begin{array}{cc} Q & 1-Q \\ 1-Q & Q \end{array} \right).
$$
Clearly $P_0U_PP_0=\left( \begin{array}{cc} P & 0 \\ 0 & 0 \end{array} \right)$ is Schmidt decomposable, and the same for $U_Q$. But
$$
P_0U_PU_QP_0=\left( \begin{array}{cc} PQ+(1-P)(1-Q) & 0 \\ 0 & 0 \end{array} \right),
$$
which we claim is non decomposable. Indeed, if $T=PQ+(1-P)(1-Q)$ were decomposable, $TT^*=PQP+(1-P)(1-Q)(1-P)$ would be diagonalizable, which would imply in particular that $PQP$ is diagonalizable, and thus $PQ$ would be decomposable.
\end{rem}
\begin{ejems}\label{ejemplos}

\begin{enumerate}
\noindent
\item
Let $\h=L^2(\mathbb{T})$ and $\l_0=H^2(\mathbb{T})$ the Hardy space. Let $\varphi:\mathbb{T}\to\mathbb{T}$ be continuous. Then the multiplication (unitary) operator $M_\varphi\in\sch_{\l_0}$. Indeed, $P_{\l_0}M_\varphi P_{\l_0^\perp}$ is a Hankel operator with continuous symbol, thus by Hartman's theorem \cite{hartman} it is compact, and thus Schmidt decomposable. Then $P_{\l_0}M_\varphi P_{\l_0^\perp} M_{\bar{\varphi}}$ and also 
$$
P_{\l_0}(1-M_\varphi P_{\l_0^\perp} M_{\bar{\varphi}})=P_{\l_0}M_\varphi P_{\l_0} M_{\bar{\varphi}}
$$
are Schmidt decomposable, as well as $P_{\l_0}M_\varphi P_{\l_0}$. Note that the same argument holds for $\varphi$ a unimodular function in $C(\mathbb{T})+H^\infty(\mathbb{T})$.
\item
The previous example can be generalized to an abstract setting. Let $\l_0\subset\h$ of infinite dimension and co-dimension. The restricted unitary group (relative to the decomposition $\h=\l_0\oplus\l_0$) is defined as
$$
\u_{res}(\l_0)=\{U\in\u(\h): [U,P_0] \hbox{ is compact}\}.
$$
Note that it is the unitary group of the C$^*$-algebra $\a_{\l_0}=\{A\in\b(\h): [A,P_0] \hbox{ is compact}\}$.
Also, if  the matrix of $U$ in terms of this decomposition is $U=\left(\begin{array}{cc} U_{11} & U_{12} \\ U_{21} & U_{22}\end{array}\right)$, then $[U,P_0]$ compact means that $U_{12}$ and $U_{21}$ are compact. Then (using that $U$ is unitary), $U_{11}U_{11}^*+U_{12}U_{12}^*=1$, i.e. $U_{11}U_{11}^*=1+K$ with $K$ compact. This implies that $U_{11}$ is Schmidt decomposable, that is 
$$
\u_{res}(\l_0)\subset \sch_{\l_0}.
$$
Also it is clear that $U_{11}$ (as well as $U_{22}$) is a Fredholm operator. The connected components of $\u_{res}(\l_0)$ are parametrized by the Fredholm index of the $1,1$ entry.

Clearly, $M_\varphi$ of example 1 belongs to $\u_{res}(H^2(\TT))$. The index (of the $1,1$ entry coincides with minus  the winding number of $\varphi$). See for instance \cite{segalwilson}.
The connected component of the identity contains the often called Fredholm unitary group $\u_\infty(\h)=\{U\in\u(\h): U-1 \hbox{ is compact}\}$. 

\item
Let $\h=L^2(\mathbb{R}^n)$, $\Omega\subset\mathbb{R}^n$ be a measurable set with $|\Omega|<\infty$, and put $\l_0=L^2(\Omega)$ (considered as a closed subspace of $\h$). Let $U$ be the Fourier-Plancherel transform. Then $U\in\sch_{\l_0}$. Indeed, $P_{\l_0}U^*P_{\l_0}U$ is the composition of the projections onto (respectively) the Lebesgue space $L^2(\Omega)$ and the  Wiener space $W(\Omega)$ of  $\Omega$ (i.e.,  $W(\Omega)=\{f\in L^2(\mathbb{R}): \hat{f}|_{\Omega^c}=0 \hbox{ a.e.}\}$). It is known (see for instance \cite{landau}, or the survey article \cite{folland}) that this composition is of trace class, thus decomposable. Moreover, if $\theta, \omega$ are measurable  unimodular functions in $\mathbb{R}^n$, then
$M_\theta U M_\omega\in\sch_{\l_0}$: $M_\theta, M_\omega$ commute with $P_{\l_0}$, and thus
$$
P_{\l_0}M_\theta U M_\omega P_{\l_0}=M_\theta P_{\l_0} U  P_{\l_0}M_\omega
$$
is decomposable.
Also note that $U\in\sch_{W(\Omega)}$:
$$
P_{W(\Omega)}UP_{W(\Omega)}=U^{-1}P_{L^2(\Omega)}UUU^{-1}P_{L^2(\Omega)}U=U^{-1}P_{L^2(\Omega)}UP_{L^2(\Omega)}U
$$
which is Schmidt decomposable.

\item
One can characterize the symmetries (i.e., selfadjoint unitaries) which belong to $\sch_{\l_0}$. Let $\epsilon_\s$ be the symmetry which is equal to $1$ in $\s$ and $-1$ in $\s^\perp$, i.e.,  $\epsilon_\s=2 P_\s-1$. Then $\epsilon_\s\in\sch_{\l_0}$ if and only if $2P_0P_\s P_0-P_0$ is decomposable, and since it is selfadjoint, diagonalizable. This is clearly equivalent to $P_0P_\s P_0$ being diagonalizable, or $P_0P_\s$ being Schmidt decomposable. 

Consider, for instance $\h=L^2(-1,1)$. Let $U$ be the symmetry $Uf(t)=f(-t)$, and $A$ the selfadjoint (non diagonalizable) contraction $Af(t)=tf(t)$. Note that $UAU=-A$. Chandler Davis \cite{davis} gives in this case formulas for pairs of projections  $P_U$, $Q_U$ satisfying $UP_UU=Q_U$ and $P_U-Q_U=A$. Namely
$$
P_U=\frac12\{1+A+U(1-A^2)^{1/2}\} \ \hbox{ and } \  Q_U=\frac12\{1-A+U(1-A^2)^{1/2}\}.
$$
Then, by Proposition \ref{pq}, since $A=P_U-Q_U$ is not diagonalizable, $P_UQ_U=P_UUP_UU$ is not Schmidt decomposable, i.e. $U\notin\sch_{R(P_U)}$. Note that 
$$
R(P_U)=\{f\in L^2(-1,1): f(-t)=f(t)((1-t^2)^{1/2}-t)\  \hbox{a.e.}\}.
$$
\end{enumerate}
\end{ejems}

\section{Fixing the unitary operator}

We  fix a unitary operator $U$ in $\h$, and consider the set of closed subspaces $\l\subset\h$ such that $U\in\sch_\l$:
\begin{equation}\label{schU}
\sch^U:=\{\l\subset\h: \l \hbox{ is closed and } U\in\sch_\l\}.
\end{equation}

Let us state the following elementary properties of $\sch^U$.
\begin{prop}\label{31}
Let $U$ be a unitary operator in $\h$. 
\begin{enumerate}
\item
If $\l\in\sch^U$, then $\l^\perp\in\sch^U$.
\item
If $\l$ is an invariant subspace for $U$, then $\l\in\sch^U$.
\item
As a consequence of 1) and 2), if $\l$ is invariant for $U^*$, then $\l\in\sch^U$.
\end{enumerate}
\end{prop}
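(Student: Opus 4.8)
The plan is to handle the three items separately, reducing the second and third to the definition of Schmidt decomposability and to the first, which is itself already contained in an earlier result.

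For item 1, there is essentially nothing to do beyond invoking Corollary \ref{equivalentes}: the equivalence of its parts (1) and (4) states precisely that $U\in\sch_{\l}$ if and only if $U\in\sch_{\l^\perp}$, which in the present notation reads: $\l\in\sch^U$ if and only if $\l^\perp\in\sch^U$.

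For item 2, I would first note that invariance of $\l$ under $U$ gives $U\l\subseteq\l$, so that $P_\l Uh=Uh$ for every $h\in\l$; hence $P_\l U|_{\l}=U|_{\l}$ is simply the restriction $V:=U|_{\l}:\l\to\l$, which is an isometry because $U$ is unitary. It then remains to exhibit a singular value decomposition of an isometry. I would fix any orthonormal basis $\{g_n\}$ of $\l$ and set $f_n:=Vg_n$; since $V$ is an isometry the system $\{f_n\}$ is orthonormal, and expanding an arbitrary $h\in\l$ in the basis $\{g_n\}$ yields $Vh=\sum_n\langle h,g_n\rangle f_n$, i.e. $V=\sum_n f_n\otimes g_n$. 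This is a Schmidt decomposition with all singular values equal to $1$, so $U|_{\l}$ is Schmidt decomposable and $\l\in\sch^U$.

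For item 3, I would use that invariance of $\l$ under $U^*$ is equivalent to invariance of $\l^\perp$ under $U$: if $U^*\l\subseteq\l$, then for $\xi\in\l^\perp$ and $\eta\in\l$ one has $\langle U\xi,\eta\rangle=\langle\xi,U^*\eta\rangle=0$, so $U\l^\perp\subseteq\l^\perp$. Applying item 2 to $\l^\perp$ gives $\l^\perp\in\sch^U$, and then item 1 gives $\l=(\l^\perp)^\perp\in\sch^U$. The only point requiring a little care is the verification that an isometry qualifies as Schmidt decomposable in the sense of the paper's definition, namely that the constant singular values $s_n\equiv 1$ together with the two orthonormal systems $\{f_n\},\{g_n\}$ satisfy the stated requirements (in particular that the value $s_n=1$ is permitted). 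Everything else is a direct application of Corollary \ref{equivalentes} and of elementary orthogonality relations, so I do not expect any genuine obstacle.
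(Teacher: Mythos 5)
Your proposal is correct and follows essentially the same route as the paper: item 1 via the equivalence $U\in\sch_{\l}\Leftrightarrow U\in\sch_{\l^\perp}$ (the paper derives it directly from Proposition \ref{pq}.3, you cite Corollary \ref{equivalentes}(4), which encodes the same fact), item 2 by observing that $P_\l U|_\l=U|_\l$ is an isometry and hence Schmidt decomposable, and item 3 by passing to $\l^\perp$ and combining 1) and 2). The only difference is that you spell out the singular value decomposition of an isometry explicitly, which the paper takes for granted; this is a harmless elaboration.
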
 
\begin{proof}
If $\l\in\sch^U$, then $P_\l U P_\l U^*$ is decomposable. Then 
$$
P_\l^\perp (UP_\l U^*)^\perp=P_{\l^\perp} U P_{\l^\perp}U^*
$$
is decomposable, i.e. $\l^\perp \in\sch^U$.

If $\l$ is an invariant subspace for  $U$, then $U|_\l:\l\to\l$ is an isometry, and thus has a singular value decomposition.

Finally, if $\l$ is invariant for $U^*$, then $\l^\perp$ is invariant for $U$. Thus, $\l^\perp\in\sch^U$, and therefore $\l\in\sch^U$.
\end{proof}
Thus, $\sch^U$ contains the lattice of invariant subspaces of $U$. It is not, however, itself a lattice, as the following remark shows.

\begin{rem}
If $\l_1,\l_2\in\sch^U$, and $\l_1\perp\l_2$, then $\l_1\oplus\l_2\in\sch^U$
may not lie in $\sch^U$.
Indeed, let $P_1,P_2$ be the orthogonal projections onto $\l_1$ and $\l_2$.
Denote by $Q_1=UP_1U^*$,  We want to study if $(P_1+P_2)U(P_1+P_2)$,
or equivalently if $(P_1+P_2)(Q_1+Q_2)$, is decomposable.
As we shall see below, we only need to examine the generic part
of the pair $P_1+P_2, Q_1+Q_2$.
Thus we can suppose $\h=\l\times\l$, and
$$P_1+P_2=
\left(\begin{array}{cc} 1 & 0 \\ 0 & 0 \end{array}\right)\!\!,
\ \  
Q_1+Q_2=
\left(\begin{array}{cc} C^2 & CS \\ CS & S^2 \end{array}\right)=
\left(\begin{array}{cc} C & -S \\ S & C \end{array}\right)\left(\begin{array}{cc} 1 & 0 \\ 0 & 0 \end{array}\right)\left(\begin{array}{cc} C & S \\ -S & C \end{array}\right)\!\!,
$$
where $\left(\begin{array}{cc} C & -S \\ S & C \end{array}\right)$ is a unitary operator. Then
$$ P_1=
\left(\begin{array}{cc} E & 0 \\ 0 & 0 \end{array}\right)\!\!,
\\\
 P_2=\left(\begin{array}{cc} 1-E & 0 \\ 0 & 0 \end{array}\right)\!\!,
\\\
Q_1=\left(\begin{array}{cc} CEC & CES \\ SEC & SES \end{array}\right)\!\!,
\\\
Q_2=\left(\begin{array}{cc} CEC & CES \\ SEC & SES \end{array}\right)\!\!.$$
The assumption that $\l_1,\l_2\in\sch^U$ means that
$P_1Q_1P_1$ and $P_2Q_2P_2$ are diagonalizable in $\l\times \l$, i.e.
$(ECE)^2$ and $((1-E)C(1-E))^2$ are diagonalizable in $\l$, and since $C\ge 0$, $ECE$ and $(1-E)C(1-E)$ are diagonalizable. On the other hand, we have to examine weather these
 assumptions imply that $(P_1+P_2)(Q_1+Q_2)(P_1+P_2)$ is diagonalizable in $\l\times\l$, which is clearly equivalent to $C^2$, or $C$, being diagonalizable. Therefore it suffices to exhibit an example of a positive injective contraction $C$ and a projection $E$, such that $ECE$ and $(1-E)C(1-E)$ are diagonalizable, but $C$ is not.

Consider for instance
$C=\frac12\left(\begin{array}{cc} 1 & A \\ A & 1 \end{array}\right)$
acting in $L^2(0,1)\times L^2(0,1)$, and $A=M_t$
(multiplication by the variable $t$) in $L^2(0,1)$.
Then it is easy to see that $C$ is a positive contraction with trivial nullspace.
Also, if  $E=\left(\begin{array}{cc} 1 & 0 \\ 0 & 0 \end{array}\right)$,
it is clear that $ECE=(1-E)C(1-E)=1$ in $L^2(0,1)$.
But $C$ is not diagonalizable.
If it were,
$C - \textstyle\frac{1}{2} 1 = 
\left(\begin{array}{cc} 0 & A \\ A & 0 \end{array}\right)$
would be diagonalizable, and thus
$$\left(\begin{array}{cc} 0 & A \\ A & 0 \end{array}\right)^2=
\left(\begin{array}{cc} A^2 & 0 \\ 0 & A^2 \end{array}\right)$$
and therefore $A^2$, and $A$ would be diagonalizable.
\end{rem}

Let us exhibit an example of a closed subspace which does not belong to $\sch^S$, where $S$ is the bilateral shift operator acting in $\ell^2(\mathbb{Z})$.
\begin{ejem}\label{ejem 32}
Let $S$ be the bilateral shift operator in $\ell^2(\mathbb{Z})$. Consider the closed subspace
$$
\l_0=\{(a_k)\in\ell^2(\mathbb{Z}): a_k=a_{-k} \hbox{ for all } k\in\mathbb{Z}\}.
$$
Denote by $\Pi\in\b(\ell^2(\mathbb{Z}))$ the symmetry $\Pi(a_k)_m=a_{-m}$. Then it is elementary that
$P_{\l_0}=\frac12(1+\Pi)$. Thus $P_{\l_0} S P_{\l_0}=\frac14 (1+\Pi)S(1+\Pi)$. Denote by $\ee_n\in \ell^2(\mathbb{Z})$ the elements of the canonical basis of $\ell^2(\mathbb{Z})$. Note that
$$
P_{\l_0}SP_{\l_0}\ee_0=\frac12(\ee_1+\ee_{-1}) \ , \ \  P_{\l_0}SP_{\l_0}\ee_{\pm 1}=\frac14(\ee_2+\ee_{-2}+2\ee_0)
$$ 
and 
$$
P_{\l_0}SP_{\l_0}\ee_n=\frac14(\ee_{n+1}+\ee_{n-1}+\ee_{-n-1}+\ee_{-n+1})
$$
for $n\ne 0, \pm 1$.
Then, after another elementary computation,
$$
\langle P_{\l_0}S P_{\l_0}\ee_n , \ee_m\rangle =\langle \ee_n , P_{\l_0}S P_{\l_0}\ee_m \rangle,
$$
for all $n.m\in\mathbb{Z}$. It follows that $P_{\l_0}S P_{\l_0}$, regarded as an operator in $\ell^2(\mathbb{Z})$, is selfadjoint. Thus, if it where decomposable, it would be diagonalizable. Let us show that it has no eigenvectors. We  identify $\ell^2(\mathbb{Z})$ with $L^2(\TT,\frac{dz}{2\pi})$ with the usual isomorphism, which carries $\ee_n$ to $z^n$ ($n\in\mathbb{Z}$). Then the subspace $\l_0$ is given by
$$
\l_0=\{f\in L^2(\TT): f(z)=f(\bar{z}) \hbox{ a.e.}\},
$$
the symmetry $\Pi$ is $\Pi f(z)=f(\bar{z})$, and   $P_{\l_0}SP_{\l_0} f(z)=\frac14 (z+\bar{z})(f(z)+f(\bar{z}))$. Then
$$
P_{\l_0}S |_{\l_0} f(z)=\frac12 (z+\bar{z})f(z) \ , \ \ \hbox{ for all } f\in\l_0.
$$
If $g\in\l_0$ were an eigenvector for this operator, then $\lambda g(z)=\frac{1}{2}(z+\bar{z})g(z)$ a.e., and thus $g=0$. 
\end{ejem}

 On the other hand, model spaces do belong to $\sch^S$:
\begin{ejem}
Let $\theta$ be an inner function in $\TT$. Consider the model space $\l_0=\k_\theta=H^2(\TT)\ominus \theta H^2(\TT)$,  here regarded as a subspace of $\h=L^2(\TT)$, and let $S\in\b(L^2(\TT))$ be again the bilateral shift operator, $Sf(z)=zf(z)$. Let us show that 
$$
\k_\theta\in\sch^S.
$$
We shall work with $\k_\theta^\perp=H_2(\TT)^\perp\oplus \theta H_2(\TT)$. Let us denote $\h^-=H_2(\TT)^\perp$ and $\h_\theta=\theta H_2(\TT)$. Note that $P_\theta:=P_{\h_\theta}=M_\theta P_+ M_{\bar{\theta}}$, where $P_+=P_{H_2(\TT)}$ (accordingly $P_-=1-P_+$). Then $\k_\theta^\perp\in\sch^S$ if and only if the operator 
$$
(P_- +P_\theta)S(P_- +P_\theta)=P_-SP_-+P_\theta SP_\theta +P_-SP_\theta+ P_\theta SP_-
$$
is Schmidt decomposable. First note that $P_-SP_\theta=0$: $S(R(P_\theta))=S(\theta \h^+)\subset \h^+$. The other off-diagonal operator, $P_\theta S P_-$ has rank one.
Indeed, if $f=\sum_{n\in\mathbb{Z}}z^n\in L^2(\TT)$, let $f_-=P_-f$. Then
$$
P_\theta SP_-f=P_\theta zf_-(z)=P_\theta (a_{-1}1)=a_{-1}P_\theta(1)=a_{-1}M_\theta P_+\bar{\theta}=a_{-1}\bar{\theta}(0) \theta=\theta(0) \langle f, z^{-1}\rangle \theta,
$$
i.e. $P_\theta SP_-=\overline{\theta(0)} \theta\otimes z^{-1}$. The other (diagonal) entries are
$$P_-SP_-,
$$
which is a co-isometry in $\h_-$, whose adjoint $P_-S^*P_-$ is an isometry with range $S^*(\h_-)=z^{-1}\h_-$; and
$$
P_\theta SP_\theta,
$$
which is an isometry in $\h_\theta$ with range $z \h_\theta$. We can write almost explicitly a Schmidt decomposition for $(P_- +P_\theta)S(P_- +P_\theta)$: 
$$
P_-SP_-=\sum_{m<0} z^m\otimes z^{m-1}.
$$
If we consider the Lebesgue measure normalized in $\TT$, then $\theta$ is a unit vector in  $\h_\theta$. Let $\{f_n\}_{n\ge 1}$ be an orthonormal basis for $\h_\theta$ with $f_1=\theta$. Then $P_\theta S P_\theta=\sum_{n\ge 1} zf_n\otimes f_n$.
Then
\begin{equation}\label{desc S}
(P_- +P_\theta)S(P_- +P_\theta)=\sum_{m<0} z^m\otimes z^{m-1}+\sum_{n\ge 1} zf_n\otimes f_n+\overline{\theta(0)}\ \theta\otimes z^{-1}.
\end{equation}
Note that in this expression, $z^{-1}$ is orthogonal to $z^{m-1}$ ($m<0$) and to $f_n$ ($\in\h_\theta$). Also, $\theta$ is orthogonal to $z^m$ ($m<0$) and to $zf_n$. Indeed, $\theta\perp z\theta \h^+$: if $h\in\h^+$
$$
\langle z\theta h,\theta\rangle= \frac{1}{2\pi}\int_\TT z|\theta(z)|^2 h(z)\ dz=\frac{1}{2\pi}\int_\TT z h(z)\  dz=0,
$$
by Cauchy's Theorem. Thus, the above expression (\ref{desc S}) is essentially a singular value decomposition for $(P_- +P_\theta)S(P_- +P_\theta)$. It only remains to normalize the term $\overline{\theta(0)}\ \theta\otimes z^{-1}$: let $\theta(0)=|\theta(0)|e^{i\alpha}$. Then $\overline{\theta(0)}\ \theta\otimes z^{-1}=|\theta(0)| \theta_0\otimes z^{-1}$, where $\theta_0=e^{-i\alpha}\theta$.

Note that the singular values of $(P_- +P_\theta)S(P_- +P_\theta)$ are an infinite list of $1$'s, and the number $|\theta(0)|$. Thus, using Remark \ref{pq}, the singular values of 
$$
P_{\k_\theta}SP_{\k_\theta}=(P_- +P_\theta)^\perp S(P_- +P_\theta)^\perp
$$
are also a list of (infinite) $1$'s , and $|\theta(0)|$.
\end{ejem}

\nocite{jot}
\nocite{cpr}
\nocite{davis}
\nocite{pr}
\nocite{schmidt}
\begin{ejem}
In the setting of example \ref{ejemplos}.3 ($\h=L^2(\mathbb{R})$, $U$ the Fourier-Plancherel transform), put $\l_0=L^2(0,+\infty)\subset  \h$. In this case, is $P_0UP_0$ Schmidt-decomposable? 

Denote by $\psi_n(x)=\frac{2^{1/4}}{\sqrt{n\!}}e^{-\pi x^2} {\bf H}_n(x)$ the eigenfunctions of $U$  (where ${\bf H}_n$ is $n$th Hermite polynomial): $U\psi_n=(-i)^n\psi_n$. Since for $n=2k$ even, $\psi_n$ is an even function, it follows that 
$$
(P_0UP_0+P_0U^*P_0) \psi_{2k}=(-1)^kP_0\psi_{2k}.
$$
Indeed, if $x\ge 0$, 
$$
(-1)^k\psi_{2k}(x)=\frac{1}{\sqrt{2\pi}}\int_{-\infty}^\infty \psi_{2k}(t)e^{-ixt}dt=
\frac{1}{\sqrt{2\pi}}\{\int_{-\infty}^0 \psi_{2k}(t)e^{-ixt}dt+\int_{0}^\infty \psi_{2k}(t)e^{-ixt}dt\}
$$
Changing $s=-t$,  the left hand integral becomes $\int_{0}^\infty \psi_{2k}(s)e^{ixs}dt$. 
Note that 
$$
\frac{1}{\sqrt{2\pi}}\int_{0}^\infty \psi_{2k}(t)e^{-ixt}dt=\frac{1}{\sqrt{2\pi}}\int_{-\infty}^\infty \chi_{(0,+\infty)}(t)\psi_{2k}(t)e^{-ixt}dt=\frac{1}{\sqrt{2\pi}}\int_{-\infty}^\infty P_0(\psi_{2k})(t)e^{-ixt}dt
$$
$$
=UP_0\psi_{2k}(x),
$$
and similarly 
$$
\frac{1}{\sqrt{2\pi}}\int_{0}^\infty \psi_{2k}(s)e^{ixs}dt=U^*P_0\psi_{2k}(x).
$$
Clearly $\{P_0\psi_{2k}: k\in\mathbb{N}\}$ is an orthonormal basis for $L^2(0,+\infty)$: if $k\ne k'$,
$$
\langle P_0\psi_{2k},P_0\psi_{2k'}\rangle=\int_0^\infty\psi_{2k}(t)\overline{\psi_{2k'}}(t)dt=\frac12\int_{-\infty}^\infty \psi_{2k}(t)\overline{\psi_{2k'}}(t)dt=0.
$$
A similar argument shows that they span a dense subspace of $L^2(0,\infty)$. Therefore $Re\  P_0UP_0$ is diagonalizable. More specifically $Re\ P_0UP_0=\frac12 P_0(U+U^*)P_0$ is a $\frac12$-times a symmetry, which is the $\frac12$  identity on the subspace spanned by $\{\psi_{4k}: k\ge 1\}$ and  $-\frac12$ the identity in the subspace spanned by $\{\psi_{4k+2}: k\ge 1\}$. 

Similarly, $P_0UP_0-P_0U^*P_0$ is diagonalized, by means of the eigenfunctions $P_0\psi_{2k+1}$ (with eigenvalues $i(-1)^k$), which also form an orthonormal basis of $L^2(0,\infty)$, and thus $Im\ P_0UP_0$ is diagonalizable, and $\frac{i}{2}$ times a symmetry, with a similar description as the real part. 

Then  $P_0 UP_0$ has real and imaginary parts which are diagonalizable. Note that $P_0UP_0$ is not normal, in which case it would be diagonalizable.  If it were normal, then $Re\ P_0UP_0$ and $Im\ P_0UP_0$ would commute, and then
$$
\sigma(P_0UP_0)\subset \sigma(Re \ P_0UP_0) + \sigma(i \ Im \ P_0UP_0)=\{\pm \frac12 \pm \frac{i}{2}\},
$$
and thus $\|P_0UP\|=\frac{\sqrt{2}}{2}$.
On the other hand, let $\chi=\chi_{(0,1)}$
be the characteristic function on the unit interval $(0,1)$.
Then $\|\chi\|_2=1$ and
$$P_0UP_0\chi(x)=\frac{1}{\sqrt{2\pi}}\int_0^1e^{-itx} dt=
\frac{1}{\sqrt{2\pi}}\displaystyle{\frac{1-e^{-ix}}{i \ x}}.$$
Thus, 
$$\|P_0UP_0\chi\|_2^2=
\frac{1}{2\pi}\int_0^\infty 2 \displaystyle{\frac{(1-\cos(x))}{x^2}} dx=1,$$
i.e., since $P_0UP_0$ is clearly a contraction,  $\|P_0UP_0\|=1$.

The question remains, which we consider interesting in its own right, of weather $P_0UP_0$,  the compression of the Fourier transform to the positive half-line, has a singular value decomposition.
\end{ejem}
Recall  Example \ref{ejemplos}.2, were we saw that $\u_{res}(\l_0)\subset  \sch_{\l_0}$. Fix $\l_0$ and $P_0=P_{\l_0}$.  Next we give a sufficient condition for a closed subspace $\l\subset \h$, in order that $\l\in\sch^U$, for all $U\in\u_{res}(\l_0)$.

Recall \cite{ass} that a pair of orthogonal  projections $(P,Q)$ has {\it finite index} if the operator
$$
QP|_{R(P)}:R(P)\to R(Q)
$$
has finite Fredholm index. The index of this operator is called the index $\ind(P,Q)$ of the pair $(P,Q)$. Note that
$$
\ind(P,Q)=\dim(R(P)\cap N(Q))-\dim(N(P)\cap R(Q)).
$$
\begin{prop}
If $\ind(P_\l,P_0)<\infty$, then $\l\in\sch^U$, for all $U\in\u_{res}(\l_0)$.
\end{prop}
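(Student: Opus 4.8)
The plan is to reduce the statement to the compactness mechanism already exploited in Example \ref{ejemplos}.2. There the membership $U\in\sch_{\l_0}$ followed solely from the fact that the off-diagonal corner of $U$ relative to $\l_0$ is compact, and the very same computation works for an arbitrary closed subspace $\l$: if $P_\l U(1-P_\l)$ is compact, then on $\l$ (where $P_\l$ acts as the identity)
$$
(P_\l U|_\l)(P_\l U|_\l)^*=P_\l U P_\l U^* P_\l=P_\l-\bigl(P_\l U(1-P_\l)\bigr)\bigl(P_\l U(1-P_\l)\bigr)^*=1-K,
$$
with $K\ge 0$ compact. Such a positive operator is diagonalizable, so $P_\l U|_\l$ is Schmidt decomposable and $\l\in\sch^U$. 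Equivalently, it is enough to show $[U,P_\l]$ compact, i.e. $U\in\u_{res}(\l)$, and then invoke $\u_{res}(\l)\subset\sch_\l$. Thus the entire problem collapses to the implication: $U\in\u_{res}(\l_0)$ together with $\ind(P_\l,P_0)<\infty$ forces $[U,P_\l]\in\k(\h)$.

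To attack this I would split $[U,P_\l]=[U,P_0]+[U,P_\l-P_0]$. The first summand is compact precisely because $U\in\u_{res}(\l_0)$, so everything comes down to the single assertion that $P_\l-P_0$ \emph{is compact}: granting this, $[U,P_\l-P_0]=U(P_\l-P_0)-(P_\l-P_0)U$ is compact too, and we are done. I emphasize that once $P_\l-P_0$ is known to be compact the conclusion is uniform in $U$, which is exactly the ``for all $U\in\u_{res}(\l_0)$'' quantifier in the statement; so the role of $U$ is entirely absorbed into this reduction and the remaining work is purely about the geometry of the pair $(P_\l,P_0)$.

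To extract compactness of $P_\l-P_0$ from finite index I would pass to the Halmos decomposition of $(P_\l,P_0)$. On $\h_{11}$ and $\h_{00}$ the difference vanishes; on $\h_{10}=\l\cap\l_0^\perp$ and $\h_{01}=\l^\perp\cap\l_0$ it equals $+1$ and $-1$ respectively, and the finiteness of $\ind(P_\l,P_0)$ guarantees (through the formula $\ind(P_\l,P_0)=\dim(\l\cap\l_0^\perp)-\dim(\l^\perp\cap\l_0)$ and the Fredholmness of $P_0P_\l|_\l$) that both spaces are finite dimensional, contributing only finite rank. On the generic part $\h'\simeq\m\times\m$, with $C=\cos X$, $S=\sin X$ injective, one has
$$
P_\l-P_0=\left(\begin{array}{cc} S^2 & -CS \\ -CS & -S^2 \end{array}\right),
$$
so that compactness of $P_\l-P_0$ on $\h'$ is equivalent to compactness of $S$ (equivalently of $CS$).

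This last compactness is the step I expect to be the main obstacle. The Fredholmness of $P_0P_\l|_\l$ only yields that $C$ is bounded below on $\h'$, i.e. that $X$ stays away from $\pi/2$ essentially ($1\notin\sigma_{ess}(P_\l-P_0)$); by itself this does \emph{not} force $S=\sin X$ to be compact, since $X$ may have essential spectrum anywhere in $[0,\pi/2)$. To close the gap one must upgrade ``$C$ bounded below'' to ``$\sigma_{ess}(X)\subseteq\{0\}$'', i.e. to the genuine statement that the pair is a compact perturbation, which is where the precise content of finite index in the sense of \cite{ass} has to be invoked in full strength. I would therefore concentrate the hard analysis exactly here: showing that the finite-index hypothesis controls the generic part tightly enough to make $S$ compact, after which the reduction of the first two paragraphs finishes the proof.
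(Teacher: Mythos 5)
Your two reductions are sound and are essentially the same mechanism the paper uses elsewhere: the identity $(P_\l U|_\l)(P_\l U|_\l)^*=1-K$ with $K=\bigl(P_\l U(1-P_\l)\bigr)\bigl(P_\l U(1-P_\l)\bigr)^*$ reproduces Example \ref{ejemplos}.2, and the splitting $[U,P_\l]=[U,P_0]+[U,P_\l-P_0]$ correctly isolates the crux as the compactness of $P_\l-P_0$. But the step you flag as ``the main obstacle'' is a genuine gap, and it cannot be closed in the form you pose it: Fredholmness of $P_0P_\l|_{\l}$ does \emph{not} imply that $P_\l-P_0$ is compact. Take a pair in purely generic position with $X=\frac{\pi}{4}I$ on an infinite dimensional space, so $C=S=\frac{1}{\sqrt{2}}I$; then $P_0P_\l P_0=\frac12 P_0$ is Fredholm of index $0$, while $S$ is far from compact. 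So no analysis will upgrade ``$C$ bounded below'' to ``$S$ compact'' --- the implication is false, not merely hard. Worse, for such an $\l$ the conclusion of the proposition itself fails for suitable diagonal $U=U_1\oplus U_2\in\u_{res}(\l_0)$: one computes that $P_\l U|_{\l}$ is unitarily equivalent to $\frac12(U_1+W^*U_2W)$ for a fixed unitary $W:\l_0\to\l_0^\perp$, which need not be decomposable. The hypothesis therefore has to be read as saying that $\l$ lies in the restricted Grassmannian of $\l_0$ (that is, $[P_\l,P_0]$ compact in addition to the Fredholm condition), and your plan to squeeze everything out of the bare index condition is the wrong approach.

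The paper closes exactly this point by citation rather than computation: for $\l$ of finite index in the restricted-Grassmannian sense, Segal--Wilson transitivity provides $V\in\u_{res}(\l_0)$ with $P_\l=VP_0V^*$, whence
$$
P_\l UP_\l=V\left(P_0V^*UVP_0\right)V^*,
$$
and since $\u_{res}(\l_0)$ is a group, $V^*UV\in\u_{res}(\l_0)$, so Example \ref{ejemplos}.2 applies and Schmidt decomposability is preserved under the outer conjugation by Corollary \ref{equivalentes}. Note that the existence of such a $V$ is equivalent to the compactness you were after, since $VP_0V^*-P_0=[V,P_0]V^*$; so you did locate the right pivot, and your argument would be complete (and uniform in $U$, as you say) once that fact is imported. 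What is missing is the recognition that it \emph{is} an imported orbit description of $\u_{res}(\l_0)$ --- and requires the stronger reading of the hypothesis --- rather than a consequence of the corner $P_0P_\l|_{\l}$ being Fredholm.
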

\begin{proof}
If $\ind(P_\l,P_0)<\infty$, then there exists $V\in\u_{res}(\l_0)$ such that $P_\l=VP_0V^*$ (see for instance \cite{segalwilson}). Then, if $U\in\u_{res}(\l_0)$,
$$
P_\l UP_\l=VP_0V^*UVP_0V^*.
$$
Note that $V^*UV\in\u_{res}(\l_0)$ (which is a group), and thus (see Example \ref{ejemplos}.2) $P_0V^*UVP_0$ is Schmidt decomposable. 
\end{proof}

\section{Commutators}
Recall from the introduction the Halmos decomposition of $\h$ relative to a pair of projections  $P$ and $Q$:
$$
\h_{11}=R(P)\cap R(Q) , \ \h_{00}=N(P)\cap N(Q) , \ \h_{10}=R(P)\cap N(Q) , \ \h_{01}=N(P)\cap R(Q) ,
$$
and the generic part $\h'\simeq\l\times\l$
$$
\h'=\{\h_{11}\oplus\h_{00}\oplus\h_{10}\oplus\h_{01}\}^\perp.
$$
It is  easy to see that the nullspace of $[P,Q]$ is
\begin{equation}\label{ker}
N([P,Q]) = \h_{11}\oplus\h_{11}\oplus\h_{10}\oplus\h_{01}.
\end{equation}

Our main result in this section states that
$PQ$ is Schmidt decomposable if only if the commutator$[P,Q]$ is diagonalizable.

\begin{teo}\label{conmpq}
Let $P,Q$ be orthogonal projections,
then the following are equivalent:
\begin{enumerate}
\item $PQ$ is Schmidt decomposable,
\item $A= [P,Q] = PQ - QP$ is diagonalizable,
\item $X$ is diagonalizable.
\end{enumerate}
Moreover if
$PQ$ has singular values $s_n$ 
then $[P,Q]$ has eigenvalues $\pm i s_n \sqrt{1-s_{n}^{2}}, n \geq 1$,
and, eventually, $0$. 
\end{teo}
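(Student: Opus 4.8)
The plan is to push everything onto the generic part of the pair $(P,Q)$ by means of the Halmos decomposition recalled in the introduction, and then to settle the equivalences by an explicit $2\times 2$ block computation. On each of the reducing subspaces $\h_{11},\h_{00},\h_{10},\h_{01}$ the projections $P$ and $Q$ act as scalars ($0$ or $1$), so there they commute: $PQ$ is trivially Schmidt decomposable (singular values $0$ or $1$), $[P,Q]=0$, and there is no angle operator. Since $\pm i\,s_n\sqrt{1-s_n^{2}}=0$ when $s_n\in\{0,1\}$, these subspaces contribute only the eigenvalue $0$ of $[P,Q]$, which accounts for the ``eventually $0$'' in the statement. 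Thus it suffices to prove the three equivalences and the eigenvalue formula on the generic part $\h'\simeq\l\times\l$, where $P'$ and $Q'$ are given by the Halmos normal form with $C=\cos X\ge0$ and $S=\sin X\ge0$ injective, positive and commuting.

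On $\l\times\l$ I would simply multiply out the blocks, obtaining
$$
P'Q'P'=\left(\begin{array}{cc} C^2 & 0 \\ 0 & 0 \end{array}\right)
\qquad\hbox{and}\qquad
[P',Q']=\left(\begin{array}{cc} 0 & CS \\ -CS & 0 \end{array}\right).
$$
The first identity yields the equivalence of (1) and (3) at once: by Proposition \ref{pq}, $PQ$ is Schmidt decomposable iff $PQP$ is diagonalizable, iff $C^2$ is diagonalizable, iff $C=\cos X$ is diagonalizable; and since $\cos$ is a homeomorphism of $[0,\pi/2]$ onto $[0,1]$, the latter is equivalent to $X$ being diagonalizable. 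This also records that the nonzero singular values of $PQ$ on the generic part are exactly the eigenvalues $s_n=\cos x_n$ of $C$.

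For the commutator I would exploit that $A:=[P,Q]$ is always skew-adjoint (as $A^{*}=-A$), hence normal, and on the generic part has the antidiagonal form above with $D:=CS\ge0$. From an eigenvector $w$ of $D$ with $Dw=dw$ one produces the eigenvectors $\left(\begin{array}{c} w \\ iw \end{array}\right)$ and $\left(\begin{array}{c} w \\ -iw \end{array}\right)$ of $[P',Q']$, with eigenvalues $+id$ and $-id$; conversely, an eigenvector $(u,v)$ of $[P',Q']$ with eigenvalue $i\mu$ satisfies $D^{2}v=\mu^{2}v$ and $D^2 u=\mu^2 u$, so the components of a complete eigenbasis of $[P',Q']$ span a complete set of eigenvectors of $D^2$. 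This shows that $[P,Q]$ is diagonalizable iff $D=CS$ is diagonalizable, and that the nonzero eigenvalues of $[P,Q]$ are $\pm i\,d_n$ with $d_n=\cos x_n\sin x_n=s_n\sqrt{1-s_n^{2}}$, which is precisely the asserted formula.

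It then remains to close the loop by proving that $D=CS=\frac12\sin(2X)$ is diagonalizable iff $X$ is, and this is the step I expect to be the main obstacle: unlike $\cos$, the map $x\mapsto\sin 2x$ is two-to-one on $(0,\pi/2)$, so injectivity can no longer be invoked (equivalently, $-[P,Q]^2$ equals $C^2(1-C^2)$, and $t\mapsto t(1-t)$ is two-to-one on $[0,1]$). The resolution I would use is spectral-theoretic. For a bounded continuous, finite-to-one $f$, the operator $f(X)$ has $\mu$ as an eigenvalue iff the spectral projection $E_X(f^{-1}(\mu))$ of the finite set $f^{-1}(\mu)$ is nonzero, which happens iff $f^{-1}(\mu)$ meets an atom of $X$; moreover $\sum_\mu E_X(f^{-1}(\mu))$ equals the atomic part of the spectral measure of $X$. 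Hence the eigenvectors of $f(X)$ are complete iff that atomic part is $I$, i.e. iff $X$ is diagonalizable. Applying this with $f=\sin 2x$ gives $CS$ diagonalizable $\iff$ $X$ diagonalizable, and combining with the previous paragraphs closes the chain (1)$\iff$(3)$\iff$(2).
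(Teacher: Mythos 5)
Your proof is correct. It shares with the paper's argument the reduction to the generic part via the Halmos decomposition and the computation of the commutator there as the antidiagonal block operator built from $CS$, but it differs at the two points where the real work happens. For $(1)\Rightarrow(2)$ and the eigenvalue formula, the paper does not pass through Halmos at all: it starts from a Schmidt decomposition $PQ=\sum_n s_n\,\psi_n\otimes\xi_n$ and exhibits explicit eigenvectors of $[P,Q]$ as linear combinations of $\xi_k$ and $\psi_k$ with eigenvalues $\pm i s_k\sqrt{1-s_k^{2}}$; you obtain the same eigenvalues from the $2\times 2$ block form, which is equally valid and more uniform, though less explicit about the eigenvectors in terms of the Schmidt data. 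The crux, which you correctly single out, is that $\sin(2x)$ is two-to-one on $[0,\pi/2]$, so diagonalizability of $CS=\frac12\sin(2X)$ cannot be transferred to $X$ by inverting a continuous bijection. The paper's fix is to split along the spectral projection $E=\chi_{[0,\pi/2]}(2X)$ and invert with $\arcsin$ on $R(E)$ and with $\arccos$ (after shifting by $\pi/2$) on $R(E)^\perp$, recovering $X=XE+XE^\perp$ as a sum of diagonalizable pieces. Your fix is the general observation that for a continuous finite-to-one $f$ the eigenspace of $f(X)$ at $\mu$ is $E_X(f^{-1}(\mu))\h$, a finite sum of atoms of $X$, whence $f(X)$ is diagonalizable if and only if $X$ is; this lemma is cleaner and reusable, and it subsumes the paper's $\arcsin$/$\arccos$ manipulation, which is more elementary but ad hoc. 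A further small difference: you close the cycle by proving $(1)\Leftrightarrow(3)$ directly through $C^2$ and the injectivity of $\cos$ on $[0,\pi/2]$, making the implication $(3)\Rightarrow(1)$ explicit, which the paper leaves tacit.
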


\begin{proof}
If we put
$T= PQ = \d\sum_{n \geq1} s_n \psi_n \otimes \xi_n$,
where $s_n = \langle \xi_n, \psi_n \rangle$ are the singular values of $PQ$
and follow the ideas from \cite[Theorem 2.2]{pqvsp-q},
we get that for all $k$ such that $s_k < 1$,
$$A \xi_k = s_k \psi_k - s_{k}^{2} \xi_k
\hspace{0.7cm}
\textnormal{and}
\hspace{0.7cm}
A \psi_k= s_{k}^{2} \psi_k - s_{k} \xi_k.$$
Then
$$A^{2} \xi_k = (s_{k}^{4}-s_{k}^{2}) \xi_k
\hspace{0.7cm}
\textnormal{and}
\hspace{0.7cm}
A^{2} \psi_k= (s_{k}^{4}-s_{k}^{2}) \psi_k .$$

So
$$v_k = \left( s_{k}^{2}- i s_k \sqrt{1-s_{k}^{2}} \right)  \xi_k  - s_k \psi_k
\hspace{0.7cm}
\textnormal{and}
\hspace{0.7cm}
w_k = \left(s_{k}^{2} + i s_k \sqrt{1-s_{k}^{2}}\right) \xi_k  - s_k \psi_k$$
are orthogonal eigenvectors for $A$, with eigenvalues
$i s_k \sqrt{1-s_{k}^{2}}$ and $-i s_k \sqrt{1-s_{k}^{2}}$,
respectively.

Note that on the extension of the system $\xi_k$, $R(P) \ominus R(T)$, 
and
on the extension of the system $\psi_k$, $R(Q) \ominus N(T)^{\perp}$,
$A$ equals $0$.
On $R(P) + R(Q)$, $A$ is diagonalizable. On the orthogonal complement
of this subspace, namely $N(P)^{\perp} \cap N(Q)^{\perp}$, $A$ is trivial.
%
%
%

\medskip

To prove the converse, we use Halmos decomposition. 
After elementary computations, one sees that
on $\h'=\l\times \l$, $[P,Q]$ is given by
$$
[P',Q']=\left(\begin{array}{cc} 0 & CS \\ -CS & 0 \end{array} \right).
$$
If $[P,Q]$ is diagonalizable, then so are $[P',Q']$ and
$[P',Q']^2=\left(\begin{array}{cc} -C^2S^2 & 0 \\ 0 & -C^2S^2 \end{array} \right)$. Clearly this implies that $C^2S^2$ and its square root $CS$ are diagonalizable. We claim that  $X$ is diagonalizable. Indeed, note that 
$CS=\frac12 \sin(2X)$. The spectrum of $2X$ is contained in $[0,\pi]$.
Let $E=\chi_{[0,\pi/2]}(2X)$, be the spectral projection of
$2X$ corresponding to the interval $[0,\pi/2]$. Then the selfadjoint operator $2XE$, acting in $R(E)$ has spectrum contained in $[0,\pi/2]$, and $2XE^\perp$ acting in $R(E)^\perp$ has spectrum contained in $[\pi/2,\pi]$. Since $E$ and $\sin(2X)$ commute, and both selfadjoint operators are diagonalizable, they can be simultaneously diagonalized: there exist orthonormal vectors $\varphi_n$ such that 
$$
CS=\frac12\sin(2X)=\frac12\sum_{n\ge 1} s_n \varphi_n\otimes \varphi_n,
$$
with $0<s_n<1$, and either $\varphi_n\in R(E)$ or $\varphi_n\in R(E)^\perp$. Then 
$$
CSE=\frac12\sum_{\varphi_j\in R(E)} s_j\varphi_j\otimes\varphi_j \ \hbox{ and } \ CSE^\perp=\frac12\sum_{\varphi_k\in R(E)^\perp} s_k\varphi_k\otimes\varphi_k.
$$
The function  $\arcsin(t)$ is continuous in $\sigma(CSE)\subset  [0,1]$, and one has
$$
2XE=\arcsin(2CSE)=\sum_{\varphi_j\in R(E)} \arcsin(s_j)\varphi_j\otimes\varphi_j.
$$
Then, since $\cos(2XE^\perp-\frac{\pi}{2}E^\perp)=\sin(2XE^\perp)$
and the spectrum  of $2XE^\perp-\pi/2 E^\perp$ is contained in $[0,\pi/2]$ (where $\cos$ has continuous inverse) one has
$$
2XE^\perp-\pi/2E^\perp=\arccos(\sin(2XE^\perp))=\sum_{\varphi_k\in R(E)^\perp} \arccos(s_k)\varphi_k\otimes\varphi_k.
$$
Therefore $X=XE+XE^\perp$ is diagonalizable.
\end{proof}


Thus we may complete Corollary \ref{equivalentes} with the following equivalent conditions:

\begin{coro}
With the current notations, the following are equivalent:
\begin{enumerate}
\item
$U\in\sch_{\l_0}$.
\item
$[P_0,UP_0U^*]$ is diagonalizable.
\item
$X$ is diagonalizable.
\end{enumerate}
\end{coro}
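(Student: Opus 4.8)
The plan is to reduce the statement entirely to Theorem \ref{conmpq} by applying it to a suitably chosen pair of orthogonal projections. First I would set $P=P_0$ and $Q=UP_0U^*$. Since $U$ is unitary, $Q$ is precisely the orthogonal projection onto $U\l_0$, so $(P,Q)$ is a genuine pair of orthogonal projections; moreover it is exactly the pair whose Halmos decomposition produces the angle operator $X$ (with $C=\cos X$ and $S=\sin X$) that appears in condition 3. Fixing this pair at the outset pins down all the notation carried over from the previous theorem.

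Next I would invoke Corollary \ref{equivalentes}: the condition $U\in\sch_{\l_0}$ is equivalent to $P_0UP_0U^*$ being Schmidt decomposable, that is, to $PQ$ being Schmidt decomposable. This identifies condition 1 of the corollary with the first of the three equivalent conditions of Theorem \ref{conmpq} for the pair $(P,Q)$. The remaining two conditions of that theorem, read off for this same pair, are that $[P,Q]=[P_0,UP_0U^*]$ be diagonalizable and that $X$ be diagonalizable; these are precisely conditions 2 and 3 of the corollary. Hence all three equivalences follow at once.

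There is no substantial obstacle, since the real work was already carried out in Theorem \ref{conmpq}; the corollary is merely its specialization to $Q=UP_0U^*$. The only point requiring any care is the bookkeeping of notation, namely making explicit that the $X$ in condition 3 is the Halmos angle of the pair $(P_0,UP_0U^*)$ rather than of some other pair. Once $P$ and $Q$ are fixed as above, this identification is immediate, and nothing further is needed.
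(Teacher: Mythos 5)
Your proposal is correct and matches the paper's intent exactly: the corollary is stated as an immediate consequence of Theorem \ref{conmpq} applied to the pair $P=P_0$, $Q=UP_0U^*$, combined with the equivalence $U\in\sch_{\l_0}\Leftrightarrow P_0UP_0U^*$ Schmidt decomposable from Corollary \ref{equivalentes}. The paper offers no further argument, so your careful identification of $X$ as the Halmos angle operator of the pair $(P_0,UP_0U^*)$ is precisely the right bookkeeping.
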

We point out that in figuring out if $U\in\sch_{\l_0}$, only the generic part between $\l_0$ and $U\l_0$ is relevant.
\section{Geodesics of the Grassmann manifold}

Example \ref{ejemplos}.3  is related to the following. Consider the differential geometry of  the Grassmann manifold $\p(\h)$ of $\h$ (see \cite{pr}, \cite{cpr}, or the survey article \cite{survey}). It is known that two   projections/closed subspaces in generic position  can  be joined by a unique minimal geodesic. In other words, for arbitrary $P_{\l_0}, P_\l$, the reductions of these projections to their common generic part can be joined by a unique projections.  On the non generic summands of $\h$, the obstruction for the existence of a geodesic joining two subspaces $\l_0$ and $\l$  is the eventual difference between the dimensions of
$$
\l_0\cap\l^\perp \ \hbox{ and } \ \l_0^\perp\cap\l.
$$
There exists a geodesic joining $\l_0$ and $\l$ if and only if these dimensions coincide  (it is unique among minimal geodesics if and only if these dimensions are zero). 
Therefore, (if we consider $\l_0$ fixed) a closed subspace subspace $\l$ such that $\dim\l_0\cap\l^\perp=\dim\l_0^\perp\cap\l$, can be joined to $\l_0$ by a  geodesic $\delta$:  $\delta(0)=\l_0$ and $\delta(1)=\l$, which  is given by the action on $\l_0$ of a one parameter unitary group: $\delta$ is of the form 
$$
\delta(t)=e^{itZ}\l_0.
$$
The exponent $Z^*=Z$, which is co-diagonal with respect to the decomposition $\l_0\oplus\l_0^\perp$, and has norm $\|Z\|\le\pi/2$,   is factored by means of the Halmos'  decomposition. It is trivial in $(\l_0\cap\l)\oplus(\l_0^\perp\cap\l^\perp)$. It is given by
$$
Z'=\left( \begin{array}{cc} 0 & iX \\ -iX & 0 \end{array} \right)
$$
in the generic part $\h'=\l_0\times\l_0$. 
In the remaining part $\l_0\cap\l^\perp \oplus \l_o^\perp\cap\l$ it is not uniquely determined. In  the proof of Theorem \ref{37} below we recall how these multiple geodesics are obtained.

\begin{rem}
Consider the generic part $\h'$  of the fixed subspace $\l_0$ and a given subspace $\l$. There are two distinguished unitaries carrying $\l'_0$ to $\l'$ (the parts of $\l_0$ and $\l$ in $\h'$, respectively), namely the unitary $e^{iZ'}$ given by the unique geodesic and the symmetry $V$  given by Davis in \cite{davis}. This symmetry is obtained as the unitary part in the polar decomposition of $P_{\l_0'}+P_{\l'}-I_{\h'}$,
$$
P_{\l_0'}+P_{\l'}-I=V|P_{\l_0'}+P_{\l}'-I|=|P_{\l_0'}+P_{\l'}-I|V.
$$
$V$ is a symmetry because $P_{\l_0'}+P_{\l'}-I$ is a selfadjoint operator with trivial nullspace.
In terms of the operator $X$ given above, it is straightforward to verify that
$$
e^{iZ'}=\left(\begin{array}{cc} C & -S \\ S & C \end{array} \right) \ \hbox{ and } \ V=\left(\begin{array}{cc} C & S \\ S & -C \end{array} \right).
$$
They are related by 
$$
V=e^{iZ'}(2P_{\l_0'}-I)=(2P_{\l_0'}-I)e^{-iZ'}.
$$
This was proved in \cite{p-q}, and though it is a trivial verification, it is important in establishing the uniqueness of geodesics in the generic part. The following is also an easy verification.
\end{rem}
\begin{prop}
The unitary $e^{iZ'}$ commutes with the commutator $[P_{\l_0'},P_{\l'}]$. The symmetry $V$ anti-commutes with $[P_{\l_0'},P_{\l'}]$.
\end{prop}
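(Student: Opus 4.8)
The plan is to work entirely in the generic part $\h'\simeq\l\times\l$, where $P_{\l_0'}$, $P_{\l'}$, $e^{iZ'}$ and $V$ all have the explicit $2\times 2$ operator-matrix forms recorded above, and to reduce both assertions to short matrix manipulations in which the only algebraic input is the commutativity of $C=\cos X$, $S=\sin X$ and $X$ (all functions of the single self-adjoint $X$). From the proof of Theorem \ref{conmpq} we already have
$$
[P_{\l_0'},P_{\l'}]=\left(\begin{array}{cc} 0 & CS \\ -CS & 0 \end{array}\right),
$$
and the preceding remark supplies $e^{iZ'}$ and $V$ in closed form; so no structural theory beyond these formulas is needed, and the non-generic summands are irrelevant since there the commutator vanishes.

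For the commutation statement I would first observe that $iZ'=\left(\begin{array}{cc} 0 & -X \\ X & 0 \end{array}\right)$ and $[P_{\l_0'},P_{\l'}]$ are both block-anti-diagonal matrices of the shape $\left(\begin{array}{cc} 0 & -g \\ g & 0 \end{array}\right)$ with $g$ a function of $X$. Two such matrices, with entries $g$ and $h$, multiply in either order to $\left(\begin{array}{cc} -gh & 0 \\ 0 & -gh \end{array}\right)$ as soon as $gh=hg$, and hence commute; applying this with $g=X$ and $h=-CS$ shows that $iZ'$ commutes with $[P_{\l_0'},P_{\l'}]$. Since the exponential of an operator commutes with everything commuting with that operator, $e^{iZ'}$ commutes with $[P_{\l_0'},P_{\l'}]$. (Alternatively one multiplies out $e^{iZ'}[P_{\l_0'},P_{\l'}]$ and $[P_{\l_0'},P_{\l'}]e^{iZ'}$ directly and checks that both equal the same matrix, again using $CS=SC$.)

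For the anti-commutation the cleanest route is to use that Davis's symmetry swaps the two projections. A direct check from the matrix for $V$ (or its defining polar-decomposition property, cf. Examples \ref{ejemplos}.4) gives $VP_{\l_0'}V=P_{\l'}$, and since $V^2=I$ one also has $VP_{\l'}V=P_{\l_0'}$. Conjugating the commutator by the symmetry $V$ and using $V^2=I$ then yields
$$
V[P_{\l_0'},P_{\l'}]V=[VP_{\l_0'}V,VP_{\l'}V]=[P_{\l'},P_{\l_0'}]=-[P_{\l_0'},P_{\l'}],
$$
and multiplying on the right by $V$ converts this into $V[P_{\l_0'},P_{\l'}]=-[P_{\l_0'},P_{\l'}]V$, which is exactly the asserted anti-commutation.

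There is no genuine obstacle here: as the remark anticipates, the whole proposition is a verification. The only points requiring a little care are the bookkeeping of the commuting functional calculus of $X$ (so that $C$, $S$, $CS$ and $X$ may be interchanged freely), and, in the conceptual version of the anti-commutation argument, confirming the swapping property $VP_{\l_0'}V=P_{\l'}$ from the explicit matrix of $V$ rather than taking it for granted.
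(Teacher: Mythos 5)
Your proof is correct and follows essentially the route the paper intends: the paper leaves this as an ``easy verification'' from the explicit matrices of $e^{iZ'}$, $V$ and $[P_{\l_0'},P_{\l'}]$ in the generic part, and your block-anti-diagonal observation for the commutation and the swapping identity $VP_{\l_0'}V=P_{\l'}$ for the anti-commutation are just tidy packagings of that same matrix computation (all steps check out, since $C$, $S$ and $X$ commute as functions of $X$).
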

The next result characterizes when the one-parameter unitary group $e^{itZ}$  remains inside $\sch_{\l_0}$. 

\begin{teo}\label{37}
Let  $\delta(t)=e^{itZ}\l_0$ be a  geodesic starting at
$\delta(0)=\l_0$.  Then the following are equivalent:
\begin{enumerate}
\item
$e^{it_0Z}\in\sch_{\l_0}$  for some $t_0\ne 0$.
\item
$e^{itZ}\in\sch_{\l_0}$  for all $t\in\mathbb{R}$.
\item
$Z'$, the generic part of $Z$, is diagonalizable.
\end{enumerate}
In this case   $e^{tZ}$ and
the commutators $[P_0,P_{\delta(t)}]$ are simultaneously diagonalizable,  for $t\in\mathbb{R}$.
\end{teo}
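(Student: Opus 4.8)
The plan is to turn the relation $e^{itZ}\in\sch_{\l_0}$ into the diagonalizability of a single self-adjoint operator built from $X$ by functional calculus, and then to study how diagonalizability behaves under the map $X\mapsto\sin(2tX)$. First I would observe that $P_{\delta(t)}=e^{itZ}P_0e^{-itZ}$, so by Corollary \ref{equivalentes} the condition $e^{itZ}\in\sch_{\l_0}$ is equivalent to $P_0P_{\delta(t)}$ being Schmidt decomposable, and hence, by Theorem \ref{conmpq}, to the commutator $[P_0,P_{\delta(t)}]$ being diagonalizable. The orthogonal decomposition of $\h$ induced by $Z$ --- on which $Z$ is zero on $(\l_0\cap\l)\oplus(\l_0^\perp\cap\l^\perp)$, a rotation by $\pi/2$ on $(\l_0\cap\l^\perp)\oplus(\l_0^\perp\cap\l)$, and $Z'=\left(\begin{array}{cc}0&iX\\-iX&0\end{array}\right)$ on the generic part $\h'$ --- reduces $P_0$ and $e^{itZ}$, hence reduces $[P_0,P_{\delta(t)}]$. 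On the first summand the commutator vanishes; on the second it is $\frac12\sin(\pi t)$ times a fixed skew-symmetry, so it is always diagonalizable. On $\h'$, using $e^{itZ'}=\left(\begin{array}{cc}\cos tX&-\sin tX\\ \sin tX&\cos tX\end{array}\right)$ and the same computation as in the proof of Theorem \ref{conmpq} (where $[P',Q']=\left(\begin{array}{cc}0&CS\\-CS&0\end{array}\right)$), one gets
$$[P_0',P_{\delta(t)}']=\left(\begin{array}{cc}0&\frac12\sin(2tX)\\ -\frac12\sin(2tX)&0\end{array}\right).$$
Since a block operator $\left(\begin{array}{cc}0&B\\-B&0\end{array}\right)$ with $B=B^*$ is diagonalizable if and only if $B$ is (multiply by $i$ to obtain a self-adjoint operator whose square is $\mathrm{diag}(B^2,B^2)$, and recall that a self-adjoint operator is diagonalizable if and only if its square is), the only nontrivial constraint comes from $\h'$, and the whole reduction collapses to: $e^{itZ}\in\sch_{\l_0}$ if and only if $\sin(2tX)$ is diagonalizable. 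For the same reason, using $Z'^2=\mathrm{diag}(X^2,X^2)$, condition (3) holds if and only if $X$ is diagonalizable.

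With these reductions the theorem becomes a statement about $X$ alone. The implication (3)$\Rightarrow$(2) is immediate: if $X=\sum_n x_n\,\varphi_n\otimes\varphi_n$, then $\sin(2tX)=\sum_n\sin(2tx_n)\,\varphi_n\otimes\varphi_n$ is diagonal in the same basis for every $t$, so $e^{itZ}\in\sch_{\l_0}$ for all $t$; and (2)$\Rightarrow$(1) is trivial. The main obstacle is (1)$\Rightarrow$(3): from diagonalizability of $\sin(2t_0X)$ for a single $t_0\ne0$ I must recover diagonalizability of $X$, even though $f(s)=\sin(2t_0 s)$ is generally far from injective on $\sigma(X)\subseteq[0,\pi/2]$. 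The key point is that diagonalizability of a self-adjoint operator is exactly atomicity of its spectral measure, and this property pulls back through $f$ because $f$ has finite level sets. Concretely, let $E$ be the spectral measure of $X$; diagonalizability of $f(X)$ means $I=\sum_k E\big(f^{-1}(\{\lambda_k\})\big)$, summed over the eigenvalues $\lambda_k$ of $f(X)$, since $E\big(f^{-1}(\{\lambda\})\big)$ is the spectral projection of $f(X)$ at $\lambda$. As $f$ is a non-constant real-analytic function, each level set $f^{-1}(\{\lambda_k\})\cap[0,\pi/2]$ is finite, so $E\big(f^{-1}(\{\lambda_k\})\big)$ is a finite sum of point masses $E(\{s\})$ of $E$. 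Hence $E$ is purely atomic, $X$ is diagonalizable, and therefore so is $Z'$.

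Finally, for the ``moreover'' clause I would exhibit a common eigenbasis once $X=\sum_n x_n\,\varphi_n\otimes\varphi_n$ is diagonal. On the generic part the orthonormal vectors $v_n^{\pm}=\frac1{\sqrt2}\left(\begin{array}{c}\varphi_n\\ \pm i\varphi_n\end{array}\right)$ are eigenvectors of $Z'$ (with eigenvalues $\mp x_n$), hence of $e^{itZ'}$, and also of $[P_0',P_{\delta(t)}']$ for every $t$; indeed a direct check shows that $[P_0',P_{\delta(t)}']$ equals the function $-\frac i2\sin(2tZ')$ of $Z'$, so it is simultaneously diagonalized with $Z'$ and $e^{itZ'}$ in this basis. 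Together with the trivially diagonalizable contributions on the non-generic summands, this yields the simultaneous diagonalizability of $e^{itZ}$ and $[P_0,P_{\delta(t)}]$ for all $t\in\mathbb{R}$, completing the argument. The one point requiring care throughout is to keep the reduction anchored to the decomposition determined by the fixed $Z$ (rather than to the generic decomposition of each pair $(\l_0,\delta(t))$, which varies with $t$), so that the formula for $[P_0',P_{\delta(t)}']$ is obtained by honest matrix multiplication and does not rely on $\cos tX,\sin tX$ being nonnegative.
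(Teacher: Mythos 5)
Your proof is correct, and it takes a genuinely different --- and more complete --- route than the paper's. The paper's argument essentially establishes (3)$\Rightarrow$(2) together with the simultaneous diagonalization claim: it starts from the diagonalizability of $Z'$, invokes the commutation of $e^{itZ'}$ with $[P_{\l_0'},P_{\delta(t)'}]$, and then checks that $Z''$ is diagonalizable on the non-generic summand $\h''$; the implication (1)$\Rightarrow$(3) is not treated explicitly. You instead anchor everything to the fixed Halmos decomposition of the pair $(\l_0,\delta(1))$ and reduce the whole statement, via Corollary \ref{equivalentes} and Theorem \ref{conmpq}, to the single equivalence ``$e^{itZ}\in\sch_{\l_0}$ iff $\sin(2tX)$ is diagonalizable''. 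This buys you two things the paper does not have. First, a genuine proof of (1)$\Rightarrow$(3): since $s\mapsto\sin(2t_0s)$ fails to be injective on $[0,\pi/2]$ once $|t_0|>1$, one really does need an argument such as your finite-level-set / atomic-spectral-measure pullback ($E(f^{-1}(\{\lambda_k\}))$ is a finite sum of atoms of the spectral measure of $X$, so atomicity descends from $f(X)$ to $X$); simple injectivity of the functional calculus would not suffice for arbitrary $t_0$. Second, the identity $[P_0',P_{\delta(t)}']=-\tfrac{i}{2}\sin(2tZ')$ exhibits the commutator as a function of $Z'$, which makes the simultaneous diagonalization immediate rather than an appeal to a separate commutation lemma. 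Your verifications along the way (the commutator on $\h''$ being $\tfrac12\sin(\pi t)$ times a fixed skew-symmetry, the equivalence ``$\left(\begin{smallmatrix}0&B\\-B&0\end{smallmatrix}\right)$ diagonalizable iff $B$ is'' for selfadjoint $B$, and the fact that a selfadjoint operator is diagonalizable iff its square is) are all sound, and your closing caution about working in the decomposition determined by the fixed $Z$ rather than the $t$-dependent generic decomposition of $(\l_0,\delta(t))$ is exactly the right point to insist on.
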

\begin{proof}
Fix $t\in\mathbb{R}$, $t\ne 0$.
In the generic part, $Z'$ is diagonalizable, and thus $e^{itZ}$ is diagonalizable. Since $e^{itZ}$ commutes with $[P_0,P_{\delta(t)}]$ (which acts non trivially only in the generic part), and both are diagonalizable, they can be simultaneously diagonalized. It remains to examine what happens in the non generic parts.  Denote  $\l=\delta(t)$.  The fact that $\l$ and $\l_0$ are joined by a geodesic means that $\dim \l_0\cap\l^\perp=\dim \l_0^\perp\cap\l$. It was proved in \cite{survey} that in the subspace
$$
\l_0\cap \l \ \oplus \ \l_0^\perp\cap \l^\perp,
$$
$Z$ is trivial and thus $e^{itZ}$ is the identity. In
$$
\h''=\l_0\cap \l^\perp \ \oplus \ \l_0^\perp\cap \l
$$
the (multiple) geodesics are constructed as follows \cite{survey}. Put
 $$
Z'':\h''=\l_0\cap \l^\perp \  \oplus \ \l_0^\perp \cap \l \to \h'' , \ Z''(\xi\oplus\eta)=-i\frac{\pi}{2}(W^*\eta\oplus -W\xi),
$$
where $W:\l_0\cap \l^\perp \to \l_0^\perp \cap \l$ is an arbitrary unitary isomorphism.  Clearly $Z''$ is selfadjoint. Let $\{\mu_n\}$ and $\{\nu_n\}$ be orthonormal bases of $\l_0\cap \l^\perp$ and $\l_0^\perp\cap \l$, respectively, and $W\mu_n=\nu_n$. Then 
$Z''\mu_n=i\frac{\pi}{2}\nu_n$ and $Z''\nu_n=-i\frac{\pi}{2}\mu_n$. Then, for any fixed $n$,  the subspace $\s_n$ generated by (the orthonormal) pair $\mu_n,\nu_n$ is stable under $Z''$. Clearly $Z''|_{\s_n}$ is selfadjoint, and therefore diagonalizable. Then $Z''=\oplus_{n\ge 1} Z''|_{\s_n}$ is diagonalizable in $\h''$. 
\end{proof}

\begin{rem}
Let us further digress upon  Example \ref{ejem 32}. It is easy to see that 
$$
\l_0=\{(a_k)\in\ell^2(\mathbb{Z}): a_k=a_{-k} \hbox{ for all } k\in\mathbb{Z}\}
$$
and $S\l_0$ ($S=$ the bilateral shift) are in generic position. Therefore, there exists a unique geodesic joining $\l_0$ ans $S\l_0$. It is determined by the operator $X$. From Halmos' model for a pair of projections in generic position \cite{halmos}, one has that
$$
\left( \begin{array}{cc} \cos^2(X) & 0 \\ 0 &  0 \end{array} \right) =P_{\l_0} P_{S\l_0} P_{\l_0}=\frac18 (1+\Pi)S(1+\Pi)S^*(1+\Pi).
$$
Pick $f$ in  $\l_0$ (i.e. $f(\bar{z})=f(z)$ a.e.). Then 
$$
P_{\l_0} P_{S\l_0} P_{\l_0}f(z)=\frac18(z^2+\bar{z}^2+2)(f(z)+f(\bar{z}))=\frac12 (Re(z^2)+1)f(z).
$$
Then $\cos(X)=M_{\frac{1}{\sqrt2} \sqrt{Re(z^2)+1}}$ (multiplication operator) and $X=M_{\arccos\left(\frac{1}{\sqrt2} \sqrt{Re(z^2)+1}\right)}$. Then,
$\|X\|$ equals $\arccos$ of the minimum of the function $\frac{1}{\sqrt2} \sqrt{Re(z^2)+1}$, i.e. $\|X\|=\pi/2$. It follows that the geodesic (or Finsler) distance between $\l_0$ and $S\l_0$ equals
$$
d(\l_0,S\l_0)=\pi/2.
$$
On the other hand, let $Y^*=Y$ be a logarithm for the  bilateral shift $S$ : $e^{iY}=S$. For instance, put 
$Y=M_{arg(z)}$, where $arg:\TT\to [-\pi,\pi)$ is usual argument.  Then 
$$
\gamma(t)=e^{itY}P_{\l_0}e^{-itY}
$$
is a smooth curve of projections joining $\gamma(0)=P_{\l_0}$ and $\gamma(1)=P_{S\l_0}$. Therefore its length is greater or equal than $\pi/2$. Note that 
$$
\dot{\gamma}(t)=e^{itY}iYP_{\l_0}e^{-itY}-e^{itY}P_{\l_0}e^{-itY}iY=ie^{itY}[Y,P_{\l_0}]e^{-itY}.
$$
Thus $\|\dot{\gamma}(t)\|=\|[Y,P_{\l_0}]\|$. Therefore
$$
\|[Y,P_{\l_0}]\|=\int_0^1\|\dot{\gamma}(t)\|dt=length(\gamma)\ge \pi/2.
$$
\end{rem}

\section{Non orthogonal projections}
Let $\s,\t\subset\h$ be closed subspaces such that $\s\dot{+}\t=\h$, where $\dot{+}$ means direct (non necessarily orthogonal) sum. Let us  relate the product of projections $P_\s P_\t P_\s $ with the non orthogonal projection $Q=P_{\s\parallel\t}$ with range $\s$ and nullspace $\t$. 

One has the known formulas (see \cite{ando})
$$
P_\s=Q(Q+Q^*-I)^{-1} \ \hbox{ and } \ P_\t=(I-Q)(I-Q-Q^*)^{-1}.
$$
Note that the selfadjoint invertible operator $A=Q+Q^*-I$ satisfies $AQ=Q^*A$ and $AQ^*=QA$. Then
$$
P_\s P_\t=-Q(I-Q^*)A^{-2}.
$$
On the other hand, writing operators as matrices in terms of the decomposition $\h=\s\oplus\s^\perp$, 
$$
P_\s=\left( \begin{array}{cc} 1 & 0 \\ 0 & 0 \end{array}\right) , \hbox{ and } \  Q=\left( \begin{array}{cc} 1 & B \\ 0 & 0 \end{array}\right), 
$$ 
where $B=P_\s Q|_{\s^\perp}:\s^\perp\to \s$. Note that
$$
A^2=\left( \begin{array}{cc} 1+BB^* & 0 \\ 0 & 1+B^*B \end{array}\right), 
$$
and thus
$$
P_\s P_\t=\left( \begin{array}{cc} BB^*(1+BB^*)^{-1} & -B(1+B^*B)^{-1} \\ 0 & 0 \end{array}\right)
\hbox{ and } \
P_\s P_\t P_\s=\left( \begin{array}{cc} BB^*(1+BB^*)^{-1} & 0 \\ 0 & 0 \end{array}\right).
$$
In particular, we obtain the following result
\begin{prop}\label{lema 41}
$P_\s P_\t$ is Schmidt decomposable if and only if $P_{\s\parallel \t}|_{\s^\perp}=B:\s^\perp\to \s$ is Schmidt decomposable. In this case, the singular values $s_n$ of $P_\s P_\t$ and the singular values $\beta_n$ of $B$ are related by 
$$
s_n=\frac{\beta_n}{\sqrt{\beta_n^2+1}}.
$$
\end{prop}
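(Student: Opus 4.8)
The plan is to reduce the statement entirely to the explicit matrix of $P_\s P_\t P_\s$ displayed just before the proposition, together with a short functional-calculus argument. I would first record two instances of the standard fact that an operator $T$ is Schmidt decomposable if and only if $TT^*$ is diagonalizable: if $T=\sum_n s_n\,f_n\otimes g_n$ then $TT^*=\sum_n s_n^2\,f_n\otimes f_n$, and conversely a diagonalization of $TT^*$ by eigenvectors $f_n$ with eigenvalues $s_n^2>0$ yields the Schmidt decomposition $T=\sum_n s_n\,f_n\otimes(\frac{1}{s_n}T^*f_n)$. Applying this with $T=P_\s P_\t$, and using $(P_\s P_\t)^*=P_\t P_\s$ so that $(P_\s P_\t)(P_\s P_\t)^*=P_\s P_\t P_\s$, shows that $P_\s P_\t$ is Schmidt decomposable if and only if $P_\s P_\t P_\s$ is diagonalizable. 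Applying it to $T=B$ shows that $B$ is Schmidt decomposable if and only if $BB^*$ is diagonalizable.

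Next I would invoke the computation preceding the statement, namely
$$
P_\s P_\t P_\s=\left(\begin{array}{cc} BB^*(1+BB^*)^{-1} & 0 \\ 0 & 0 \end{array}\right)
$$
in the decomposition $\h=\s\oplus\s^\perp$. Since the lower block is the zero operator on $\s^\perp$ (which is trivially diagonal and contributes only to the kernel), diagonalizability of $P_\s P_\t P_\s$ is equivalent to diagonalizability of the single block $BB^*(1+BB^*)^{-1}$ acting on $\s$.

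It then remains to link the diagonalizability of $BB^*$ with that of $BB^*(1+BB^*)^{-1}$. Writing $f(x)=\frac{x}{1+x}$, the positivity of $BB^*$ lets me identify $BB^*(1+BB^*)^{-1}=f(BB^*)$ through the continuous functional calculus. The key observation is that $f$ is a homeomorphism of $[0,\infty)$ onto $[0,1)$ with continuous inverse $f^{-1}(y)=\frac{y}{1-y}$; hence $BB^*$ and $f(BB^*)$ have exactly the same eigenspaces, a vector being an eigenvector of one with eigenvalue $\lambda$ precisely when it is an eigenvector of the other with eigenvalue $f(\lambda)$. Consequently $BB^*$ admits a complete orthonormal system of eigenvectors if and only if $f(BB^*)$ does, i.e. the two are simultaneously diagonalizable or not. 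Chaining the three equivalences established above proves the first assertion.

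For the singular values I would simply track eigenvalues through this correspondence. The nonzero eigenvalues of $BB^*$ are the numbers $\beta_n^2$, so those of $f(BB^*)=BB^*(1+BB^*)^{-1}$ are $\frac{\beta_n^2}{1+\beta_n^2}$ with the same eigenvectors; but these are also the nonzero eigenvalues of $(P_\s P_\t)(P_\s P_\t)^*=P_\s P_\t P_\s$, which are exactly the squares $s_n^2$ of the singular values of $P_\s P_\t$. Hence $s_n^2=\frac{\beta_n^2}{1+\beta_n^2}$, and taking square roots gives $s_n=\frac{\beta_n}{\sqrt{\beta_n^2+1}}$. I do not expect a genuine obstacle here: the only point needing care is the verification that the functional calculus of $f$ preserves diagonalizability (supplied by the homeomorphism observation) together with the bookkeeping of the kernel, so that the nonzero singular values are matched with the correct multiplicities.
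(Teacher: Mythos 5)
Your proposal is correct and follows essentially the same route as the paper, which derives the proposition directly from the displayed matrix $P_\s P_\t P_\s=\left(\begin{smallmatrix} BB^*(1+BB^*)^{-1} & 0 \\ 0 & 0 \end{smallmatrix}\right)$ together with the strictly increasing map $t\mapsto t/\sqrt{t^2+1}$; you have merely made explicit the standard equivalences ($T$ Schmidt decomposable iff $TT^*$ diagonalizable, and the functional-calculus argument that $BB^*$ and $BB^*(1+BB^*)^{-1}$ share eigenvectors). No gaps.
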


The decreasing order is preserved because the map $f(t)=\frac{t}{\sqrt{t^2+1}}$ is strictly increasing.

In \cite{cpr}, Corach, Porta and Recht studied the geometry of the fibration from the space the oblique (non  orthogonal) projections  onto the space of orthogonal projections, in the setting of arbitrary C$^{*}$-algebras. There are several ways to assign an
orthogonal projection to a non orthogonal one: the projection onto the range, the projection onto the nullspace, etc. But it is this fibration of Corach, Porta and Recht, based on the polar decomposition, that has remarkable metric properties. Let us describe this map.

Given $Q\in\b(\h)$ with $Q^2=Q$, consider the reflection $2Q-1$, which satisfies $(2Q-I)^2=I$, is equal to the identity in $R(Q)$ and minus the identity in $N(Q)$ . Let
$$
2Q-I=\rho_Q |2Q-I|
$$
be the polar decomposition. In \cite{cpr} it was  proved that
\begin{enumerate}
\item
$\rho_Q$ is a symmetry: $\rho_Q^2=I$, $\rho_Q^*=\rho_Q$.
\item
$\rho_Q|2Q-I|=|2Q-I|^{-1}\rho_Q$.
\end{enumerate}

Let  $Q$ be a non orthogonal projection onto $\l_0$. We shall consider  the question of when  $\rho_Q\in\sch_{\l_0}$. Recall from example \ref{ejemplos}.3, that a symmetry $\rho_Q\in\sch_{\l_0}$ if and only if there exist bi-orthogonal bases of $\l_0$ and  the subspace $N(\rho_Q-I)$. Note that
$$
N(\rho_Q-I)=\{\xi\in\h: \rho_Q\xi=\xi\}=\{\xi\in\h: (2Q-I)\xi=|2Q-I|\xi\},
$$
where the last assertion follows from the algebraic properties of $2Q-I$ and $\rho_Q$

In matrix form, in terms of the decomposition $\h=\l_0\oplus \l_0^\perp$, 
$$
Q=\left(\begin{array}{cc}
1 & A \\
0 & 0
\end{array}\right) \  \!\!\!,
\ 2Q-I=\left(\begin{array}{cc}
1 & 2A \\
0  & -1 \end{array}\right)
\ \hbox{ and }
\  |2Q-I|^2=\left(\begin{array}{cc} 1 & 2A \\ 2A^*  & 4A^*A+1 \end{array}\right).
$$
Note that we are interested in the $1,1$ entry of the square root of the last (positive) matrix. Indeed,
\begin{lem}\label{lema 1,1}
$\rho_Q\in\sch_{\l_0}$ if and only if the $1,1$ entry of $|2Q-I|$ in the matrix in terms of the decomposition $\l_0\oplus\l_0^\perp=\h$ is Schmidt decomposable.
\end{lem}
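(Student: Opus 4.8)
The plan is to show that the two operators whose Schmidt decomposability is being compared are in fact one and the same operator, so that the stated equivalence becomes a tautology. First I would record that $\rho_Q$ is a symmetry, hence a unitary, so that by the very definition of $\sch_{\l_0}$ one has $\rho_Q\in\sch_{\l_0}$ if and only if the compression $P_0\rho_Q P_0$ is Schmidt decomposable in $\l_0$. On the other hand, the $1,1$ entry of $|2Q-I|$ with respect to $\h=\l_0\oplus\l_0^\perp$ is precisely the compression $P_0\,|2Q-I|\,P_0$. Thus the whole lemma reduces to the single operator identity
\[
P_0\,\rho_Q\,P_0 \;=\; P_0\,|2Q-I|\,P_0 .
\]

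The key step is to prove this identity. I would write $M=|2Q-I|$, which is a positive invertible operator since $2Q-I$ is invertible (indeed $(2Q-I)^2=I$). Starting from the polar decomposition $2Q-I=\rho_Q M$, I would compute $\rho_Q-M$ by multiplying on the right by $M$: using $M^2=(2Q-I)^*(2Q-I)$ one gets $(\rho_Q-M)M=(2Q-I)-(2Q-I)^*(2Q-I)=\bigl(I-(2Q-I)^*\bigr)(2Q-I)$. Since $I-(2Q-I)^*=2(I-Q)^*$, and $(2Q-I)M^{-1}=\rho_Q$, right multiplication by $M^{-1}$ yields
\[
\rho_Q-M \;=\; 2\,(I-Q)^*\,\rho_Q .
\]

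Finally I would compress this identity to $\l_0$. Because $Q$ is a (possibly non-orthogonal) projection with range $\l_0$, it restricts to the identity on $\l_0$, i.e. $QP_0=P_0$, so $(I-Q)P_0=0$ and therefore $P_0(I-Q)^*=\bigl((I-Q)P_0\bigr)^*=0$. Compressing $\rho_Q-M=2(I-Q)^*\rho_Q$ by $P_0$ on both sides then gives $P_0(\rho_Q-M)P_0=2\,P_0(I-Q)^*\rho_Q P_0=0$, which is exactly the desired identity, and the lemma follows at once from the definition of $\sch_{\l_0}$. The only genuinely delicate point, and the one I expect to require the most care, is the passage from ``simultaneously Schmidt decomposable'' to the much stronger literal equality of the two compressions: it is the range condition $QP_0=P_0$ alone (nothing about angles, spectra, or singular values) that annihilates the off-diagonal correction term $2(I-Q)^*\rho_Q$ under compression, and one must keep careful track of adjoints so that it is $(I-Q)^*$, rather than $I-Q$, that is killed on the correct side.
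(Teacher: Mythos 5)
Your argument is correct and is essentially the paper's: both proofs reduce the lemma to the literal operator identity $P_0\rho_QP_0=P_0|2Q-I|P_0$, obtained from the range condition $QP_0=P_0$ (equivalently $(2Q-I)P_0=P_0$). The paper gets there in one line by writing $\rho_Q=|2Q-I|(2Q-I)$ (using the commutation relation $\rho_Q|2Q-I|=|2Q-I|^{-1}\rho_Q$), whereas you keep the polar factors in the order $2Q-I=\rho_Q|2Q-I|$ and absorb the discrepancy into the correction term $2(I-Q)^*\rho_Q$, which the compression kills; the extra step is sound but the underlying idea is the same.
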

\begin{proof}
$\rho_Q\in\sch_{\l_0}$ if and only if $P_0\rho_QP_0$ is Schmidt decomposable.
$$
P_0\rho_QP_0=P_0|2Q-I|(2Q-I)P_0=P_0|2Q-I|P_0,
$$
because $R(Q)=\l_0$, and thus $(2Q-I)P_0=P_0$.
\end{proof}

\begin{rem} 

One can write $Q$ in terms of the Halmos decomposition induced by the subspaces $\l_0=R(Q)$ and $N(Q)$.
Clearly $\l_0\cap N(Q)=\{0\}$ and $\l_0^\perp\cap N(Q)^\perp=(\l_0+N(Q))^\perp=\{0\}$. On $\l_0\cap N(Q)^\perp$, $Q$ is the identity, and on $\l_0^\perp\cap N(Q)$, $Q$ is trivial.
In \cite{buckholtz}, D. Buckholtz proved that two closed subspaces $\s$, $\t$ satisfy that $\s\dot{+}\t=\h$ if and only if $P_\s-P_\t$ is invertible. In our case, this implies that $P_0-P_{N(Q)}$ is invertible. Also in \cite{buckholtz},   the formula
$$
Q=P_0\left(P_0-P_{N(Q)}\right)^{-1}
$$
was established. In the generic subspace (between $\l_0$ and $N(Q)$), we have therefore (denoting by $Q'$ the restriction of $Q$ to this part)
$$
Q'=\left(\begin{array}{cc} 1 & 0 \\ 0 & 0 \end{array}\right)\left(\begin{array}{cc} 1-C^2 & -CS \\ -CS & -S^2 \end{array}\right)^{-1}.
$$  
The operator $(P_0-P_{N(Q)})^2=\left(\begin{array}{cc} S^2 & 0 \\ 0 & S^2 \end{array}\right)$  is invertible. Then $S$ is invertible, and therefore
$$
(P_0-P_{N(Q)})^{-1}=(P_0-P_{N(Q)})(P_0-P_{N(Q)})^{-2}=\left(\begin{array}{cc} 1 & -CS^{-1} \\ -CS^{-1} & -1 \end{array}\right),
$$
and
$$
Q'=\left(\begin{array}{cc} 1 & -CS^{-1} \\ 0 & 0 \end{array}\right).
$$
\end{rem}
In particular, one obtains that
\begin{coro}
Let $Q$ be a projection. Then $Q$ is unitarily equivalent to a projection $Q_+$ with matrix (in terms of the decomposition $\h=R(Q_+)\oplus N(Q_+)^\perp$) 
$$
Q_+=\left(\begin{array}{cc} 1 & B \\ 0 & 0  \end{array}\right),
$$
with $B\ge 0$.
\end{coro}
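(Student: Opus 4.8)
The plan is to read off the conclusion from the Halmos decomposition already used in the preceding remark, correcting only the sign of the off-diagonal entry. First I would write $Q$ in the orthogonal decomposition $\h=R(Q)\oplus R(Q)^\perp$, where, exactly as in this section, it has the form $\left(\begin{array}{cc} 1 & A \\ 0 & 0 \end{array}\right)$ with $A=P_{R(Q)}Q|_{R(Q)^\perp}$. Since $R(Q)\cap N(Q)=\{0\}$ and $R(Q)^\perp\cap N(Q)^\perp=(R(Q)+N(Q))^\perp=\{0\}$, the Halmos decomposition of the pair $(P_{R(Q)},P_{N(Q)})$ has $\h_{11}=\h_{00}=\{0\}$, so $\h=\h_{10}\oplus\h_{01}\oplus\h'$, with $\h_{10}=R(Q)\cap N(Q)^\perp\subset R(Q)$ (where $Q$ is the identity), $\h_{01}=R(Q)^\perp\cap N(Q)\subset R(Q)^\perp$ (where $Q$ is trivial), and $\h'$ the generic part. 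On $\h_{10}\oplus\h_{01}$ the idempotent $Q$ is diagonal, so the off-diagonal $A$ is supported entirely on $\h'$.

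On the generic part $\h'\simeq\l\times\l$ the remark computed $Q'=\left(\begin{array}{cc} 1 & -CS^{-1} \\ 0 & 0 \end{array}\right)$, where $C=\cos X\ge0$ and $S=\sin X\ge0$ commute and are injective; moreover boundedness of $Q$ forces $P_{R(Q)}-P_{N(Q)}$ to be invertible (\cite{buckholtz}), hence $S$ is bounded below and $CS^{-1}=\cot X\ge0$ is a bounded positive operator on $\l$. Thus $A=(-CS^{-1})\oplus 0$. I would then conjugate $Q$ by the symmetry $V=2P_{R(Q)}-I=\left(\begin{array}{cc} 1 & 0 \\ 0 & -1 \end{array}\right)$. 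A one-line block computation gives $VQV=\left(\begin{array}{cc} 1 & -A \\ 0 & 0 \end{array}\right)$, and since $V$ acts as the identity on $R(Q)$ it fixes that subspace; hence $Q_+:=VQV$ has range $R(Q_+)=R(Q)$ and off-diagonal entry $B=-A=(CS^{-1})\oplus 0$.

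Assembling over the three Halmos pieces, $Q_+$ is unitarily equivalent to $Q$ and has the stated form with $B=(CS^{-1})\oplus 0$. The one point needing care, which I regard as the crux, is the precise meaning of $B\ge0$: the off-diagonal $B$ maps $R(Q_+)^\perp$ into $R(Q_+)$, so its positivity must be read through the canonical identification of the two copies of $\l$ in the Halmos model $\h'\simeq\l\times\l$, under which $CS^{-1}=\cot X$ is a bona fide positive operator, while on the non-generic part $B$ vanishes. I would make this identification explicit and check that it is the one implicit in the block form of the statement; the remaining verifications (the sign flip, the invariance of the range under $V$, and the triviality of the non-generic off-diagonal blocks) are routine and already prepared by the remark.
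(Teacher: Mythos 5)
Your proposal is correct and follows essentially the same route as the paper: read off $Q'=\left(\begin{smallmatrix} 1 & -CS^{-1} \\ 0 & 0 \end{smallmatrix}\right)$ on the generic part from the preceding remark, note that $Q$ is the identity or trivial on the non-generic parts, and conjugate by the symmetry $\left(\begin{smallmatrix} 1 & 0 \\ 0 & -1 \end{smallmatrix}\right)$ to flip the sign of the off-diagonal entry. Your added care about where $A$ is supported and about the identification under which $CS^{-1}\ge 0$ makes sense is a welcome elaboration of details the paper leaves implicit, but it is not a different argument.
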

\begin{proof}
$Q'$ is unitarily equivalent to its Halmos model, which has a negative $1,2$ entry. It suffices to conjugate this model with the symmetry $\left(\begin{array}{cc} 1 & 0 \\ 0 & -1 \end{array}\right)$. On the other (non generic parts), $Q$ is either trivial or the identity.
\end{proof}

Also, writing $Q$ in the Halmos decomposition induced by $R(Q)=\l_0$ and $N(Q)$ allows us to obtain a formula for $|2Q-I|$. 
Note that in the generic part $\h'$, one has 
$$
|2Q-1|^2=\left( \begin{array}{cc}
1 & -2CS^{-1} \\
-2CS^{-1} & 4 C^2S^{-2}+1
\end{array}\right)=
S^{-2}\left( \begin{array}{cc}
S^2 & -2CS \\
-2CS & 3 C^2+1
\end{array}\right).
$$
\begin{lem}
With the current notations
\begin{equation}\label{diagonalizacion}
\left( \begin{array}{cc}
S^2 & -2CS \\-2CS & 3 C^2+1
\end{array}\right)=
U \left( \begin{array}{cc}
(1+C)^2 & 0 \\
0 & (1-C)^2 \end{array}\right) U^*,
\end{equation}
where
$$
U=\frac{1}{\sqrt{2}} \left( \begin{array}{cc} -S(1+C)^{-1/2} & S(1-C)^{-1/2} \\(1+C)^{1/2} & (1-C)^{1/2} \end{array}\right)
$$
is a unitary operator in $\h_0$.
\end{lem}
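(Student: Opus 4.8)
The statement is an identity in the commutative algebra generated by $X$, so the plan is to treat $C=\cos X$ and $S=\sin X$ as commuting positive operators and to reduce everything to the single relation $S^2=(1-C)(1+C)$, which is just $C^2+S^2=1$ rewritten. First I would abbreviate the four (self-adjoint, mutually commuting) entries of $\sqrt2\,U$ as
$$
a=-S(1+C)^{-1/2},\quad b=S(1-C)^{-1/2},\quad c=(1+C)^{1/2},\quad d=(1-C)^{1/2},
$$
and carry out the verification entrywise through the Borel functional calculus, manipulating the square roots and inverses exactly as scalar functions of $X$.

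Before computing I would dispose of well-definedness. On the generic part $0\le C\le 1$, so $1+C\ge 1$ and $(1+C)^{\pm1/2}$ are bounded; the factor $1-C$, however, need not be bounded below, so $(1-C)^{-1/2}$ is a priori unbounded. This apparent difficulty evaporates because $(1-C)^{-1/2}$ only ever occurs multiplied by $S$: the factorization $S=(1-C)^{1/2}(1+C)^{1/2}$ gives $a=-(1-C)^{1/2}$ and $b=(1+C)^{1/2}$, so in fact every entry of $U$ is a bounded function of $X$ and $U$ is a genuine bounded operator. Unitarity then reduces to the scalar identities, all immediate from $S^2=(1-C)(1+C)$, namely $a^2=d^2=1-C$, $b^2=c^2=1+C$, $ab=ac=-S$ and $bd=cd=S$. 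From these one reads off that $U^*U$ and $UU^*$ each have diagonal entries $\frac12(a^2+c^2)=\frac12(b^2+d^2)=1$ and off-diagonal entries $\frac12(ab+cd)=\frac12(ac+bd)=0$, so $U^*U=UU^*=I$.

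For the diagonalization itself I would expand $UDU^*$, with $D=\mathrm{diag}\big((1+C)^2,(1-C)^2\big)$, and match it against the matrix $M$ on the left-hand side of (\ref{diagonalizacion}). Using the products above, the $(1,1)$ entry is $\frac12\big(a^2(1+C)^2+b^2(1-C)^2\big)=\frac12 S^2\big((1+C)+(1-C)\big)=S^2$; the $(2,2)$ entry is $\frac12\big((1+C)^3+(1-C)^3\big)=\frac12(2+6C^2)=3C^2+1$; and both off-diagonal entries equal $\frac12 S\big((1-C)^2-(1+C)^2\big)=-2CS$. These are precisely the entries of $M$, which establishes (\ref{diagonalizacion}).

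The hard part is essentially nonexistent: commutativity of $C$ and $S$ makes the entire argument bookkeeping organized around $S^2=(1-C)(1+C)$ and the two elementary expansions $(1+C)^3+(1-C)^3=2+6C^2$ and $(1-C)^2-(1+C)^2=-4C$. The only point that genuinely demands care is the one isolated in the second step, that $(1-C)^{-1/2}$ may be unbounded when $C$ has spectrum accumulating at $1$, and this is settled once and for all by absorbing that factor into $S$.
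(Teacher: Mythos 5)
Your proof is correct and takes essentially the same route as the paper's: a direct entrywise verification in the commutative functional calculus of $X$, with everything reduced to $S^2=(1-C)(1+C)$. The one point of divergence is how the entries of $U$ are seen to be bounded: the paper observes that in this setting $S^2=(1-C)(1+C)$ is invertible (a consequence of $R(Q)\dot{+}N(Q)=\h$ via Buckholtz's theorem recalled in the preceding remark), so $1-C$ is bounded below and $(1-C)^{-1/2}$ is bounded outright, whereas you absorb that factor into $S$ — both are valid, and the difficulty you guard against does not actually occur here.
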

\begin{proof}
Note that $(1-C)(1+C)=S^2$ is invertible, thus $1+C$ and $1-C$ are positive and invertible, and the matrices above make sense. The proof that $U$ is unitary and that the factorization holds are straightforward verifications. This factorization is obtained by formally diagonalizing the matrix of $|2Q-1|^2$ ($(1\pm C)^2$ are its formal eigenvalues).
\end{proof}
Putting these formulas together,
\begin{coro}
Let $Q$ be a (possibly oblique) projection with $R(Q)=\l_0$.
In the Halmos decomposition given by $\l_0=R(Q)$ and $N(Q)$, the modulus $|2Q-I|$ of $2Q-I$ is given by the identity
$I_{\l_0\cap N(Q)^\perp\oplus \l_0 \cap N(Q)}$  in the non trivial non generic part $\l_0\cap N(Q)^\perp\oplus \l_0 \cap N(Q)$. In the generic part $\h_0$ it is given by
$
|2Q_0-I|= \left( \begin{array}{cc} S & -C \\ -C & (1+C^2)S^{-1} \end{array} \right).
$
\end{coro}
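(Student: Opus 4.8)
The plan is to combine the two preceding computations and then extract a positive square root, after first disposing of the non-generic summands. By the remark preceding this corollary, among the four Halmos blocks for the pair $(P_{\l_0},P_{N(Q)})$ only $\l_0\cap N(Q)^\perp$ and $\l_0^\perp\cap N(Q)$ are nontrivial, since $\l_0\cap N(Q)=\{0\}$ and $\l_0^\perp\cap N(Q)^\perp=(\l_0+N(Q))^\perp=\{0\}$. On $\l_0\cap N(Q)^\perp$ the projection $Q$ is the identity, so $2Q-I=I$; on $\l_0^\perp\cap N(Q)$ it is trivial, so $2Q-I=-I$. In both cases $|2Q-I|=I$, which settles the nontrivial non-generic part.

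In the generic part $\h_0$ I would feed the remark's identity for $|2Q-I|^2$ into the diagonalization (\ref{diagonalizacion}). Since $C$ and $S$ are commuting functions of $X$, the central factor $S^{-2}$ passes through the unitary $U$, so that
$$
|2Q-I|^2=U\left(\begin{array}{cc} (1+C)^2S^{-2} & 0 \\ 0 & (1-C)^2S^{-2}\end{array}\right)U^*=U\left(\begin{array}{cc} \frac{1+C}{1-C} & 0 \\ 0 & \frac{1-C}{1+C}\end{array}\right)U^*,
$$
using $S^2=(1-C)(1+C)$. As $U$ is unitary and the inner matrix is positive and invertible, the unique positive square root is obtained diagonally:
$$
|2Q-I|=U\left(\begin{array}{cc} d_+ & 0 \\ 0 & d_-\end{array}\right)U^*, \qquad d_\pm=\sqrt{\frac{1\pm C}{1\mp C}}=\frac{1\pm C}{S}.
$$

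Finally I would perform the conjugation $U\,\mathrm{diag}(d_+,d_-)\,U^*=d_+u_1u_1^*+d_-u_2u_2^*$, where $u_1,u_2$ are the columns of $U$. Using $S^2(1\pm C)^{-1}=1\mp C$, the rank-one pieces collapse to
$$
u_1u_1^*=\frac{1}{2}\left(\begin{array}{cc} 1-C & -S \\ -S & 1+C\end{array}\right), \qquad u_2u_2^*=\frac{1}{2}\left(\begin{array}{cc} 1+C & S \\ S & 1-C\end{array}\right),
$$
and then the entries drop out from the scalar identities $d_\pm(1\mp C)=S$, $d_--d_+=-2C/S$, and $(1+C)^2+(1-C)^2=2(1+C^2)$: the diagonal entries become $S$ and $(1+C^2)S^{-1}$, and both off-diagonal entries become $-C$, which is exactly the asserted matrix for $|2Q_0-I|$.

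The only point demanding care — and it is purely computational, not conceptual — is the legitimacy of these fractional-power manipulations. This rests on $1\pm C$ being positive and invertible on $\h_0$, which holds because $S^2=(1-C)(1+C)$ is invertible there (as already used in the preceding lemma), and on the fact that $d_\pm$, $S$, and $C$ are continuous functions of the single self-adjoint operator $X$ and hence mutually commute, so that every scalar identity above transcribes verbatim to the operator level.
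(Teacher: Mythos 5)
Your proposal is correct and follows essentially the same route as the paper: it invokes the preceding remark and the diagonalization lemma, commutes the factor $S^{-2}$ (a function of $X$) past $U$, takes the positive square root diagonally, and then carries out the conjugation $U\,\mathrm{diag}(d_+,d_-)\,U^*$ explicitly — the paper simply states this last computation without writing out the rank-one pieces. Your treatment of the non-generic summands is likewise what the paper means by ``clear.''
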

\begin{proof}
The assertion on the non generic part is clear. In $\h_0$
$$
|2Q_0-I|=\left(S^{-2}U \left( \begin{array}{cc} (1+C)^2 & 0 \\ 0 & (1-C)^2 \end{array} \right)U^*\right)^{1/2}=S^{-1} U  \left( \begin{array}{cc} 1+C & 0 \\ 0 & 1-C \end{array} \right)U^*
$$
$$
= \left( \begin{array}{cc} S & -C \\ -C & (1+C^2)S^{-1} \end{array} \right).
$$
\end{proof} 
Returning to our original question, the answer is now straightforward:
\begin{teo}
Let $Q=\left(\begin{array}{cc} 1 & A \\ 0 & 0 \end{array}\right)$ be a projection with $R(Q)=\l_0$. Then  the following are equivalent:
\begin{enumerate}
\item
$\rho_Q\in\sch_{\l_0}$.
\item
The operator $X$ of the Halmos model induced by $R(Q)$ and $N(Q)$ is diagonalizable.
\item
$QQ^*$ is diagonalizable (or equivalently, $Q^*Q$ is diagonalizable).
\item
$A$ is Schmidt decomposable.
\end{enumerate}
\end{teo}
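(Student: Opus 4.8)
The plan is to reduce all four conditions to the single statement that the generic part of $X$ is diagonalizable, by reading off from the formulas already assembled the relevant entries of $Q$ and of $|2Q-I|$ in the Halmos decomposition induced by $\l_0=R(Q)$ and $N(Q)$. The one recurring tool, already exploited in the proof of Theorem \ref{conmpq}, is that a continuous \emph{injective} function applied to a selfadjoint operator preserves diagonalizability in both directions, together with the fact that a positive operator is Schmidt decomposable exactly when it is diagonalizable.

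First I would prove $(1)\Leftrightarrow(2)$. By Lemma \ref{lema 1,1}, $\rho_Q\in\sch_{\l_0}$ iff the $1,1$ entry of $|2Q-I|$, taken in $\h=\l_0\oplus\l_0^\perp$, is Schmidt decomposable. Collecting this entry from the corollary just proved, which gives $|2Q-I|$ in the Halmos model, it is the identity on the non-generic piece $\l_0\cap N(Q)^\perp$ and equals $S=\sin X$ on the generic part $\h_0$. This entry is positive, hence Schmidt decomposable iff diagonalizable; the identity summand is harmless, so this holds iff $S=\sin X$ is diagonalizable, and since $\sin$ is injective on $\sigma(X)\subset[0,\pi/2]$, iff $X$ is diagonalizable.

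Next, $(3)\Leftrightarrow(4)$ is immediate: in $\l_0\oplus\l_0^\perp$ one computes $QQ^*=\left(\begin{array}{cc} 1+AA^* & 0 \\ 0 & 0\end{array}\right)$, so $QQ^*$ is diagonalizable iff $AA^*$ is, i.e. iff $A$ is Schmidt decomposable; and since $Q^*Q$ carries the same nonzero spectrum, the two versions of condition $(3)$ agree. For $(4)\Leftrightarrow(2)$ I pass again to the Halmos model, where (using the formula $Q'=\left(\begin{array}{cc} 1 & -CS^{-1} \\ 0 & 0\end{array}\right)$ obtained above) the $1,2$ entry $A$ vanishes on $\l_0^\perp\cap N(Q)$ and equals $-CS^{-1}$ on $\h_0$; thus $A^*A$ reduces to $C^2S^{-2}=\cot^2 X$ on the generic part, and this is diagonalizable iff $X$ is.

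Each step is short, so the only real care lies in the bookkeeping of the non-generic Halmos summands and in confirming that the functional-calculus maps are genuinely injective on the part of $\sigma(X)$ that occurs. The point to check is that on the generic part $S=\sin X$ is bounded below, equivalently $\sigma(X)\subset[\delta,\pi/2]$ for some $\delta>0$: this follows because boundedness of the oblique projection $Q$ forces $-A=\cot X$ to be bounded. With this in hand, $\cot$, $\arcsin$ and squaring are all bicontinuous bijections on $\sigma(X)$, and the four reductions legitimately collapse to the single assertion that $X$ is diagonalizable.
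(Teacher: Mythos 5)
Your proposal is correct and follows essentially the same route as the paper: Lemma \ref{lema 1,1} plus the Halmos-model formula for $|2Q-I|$ gives $(1)\Leftrightarrow[S=\sin X$ diagonalizable$]\Leftrightarrow(2)$, and the identification of $A$ with $-CS^{-1}$ on the generic part (equivalently, of $QQ^*$ with $I\oplus I\oplus\mathrm{diag}(S^{-2},0)$, since $1+C^2S^{-2}=S^{-2}$) ties in $(3)$ and $(4)$. Your explicit check that $\sigma(X)$ is bounded away from $0$, so that $\sin$, $\cot^2$ and $\arcsin$ are bicontinuous bijections on it, is a point the paper leaves implicit (it follows from the invertibility of $S$ noted earlier), but it is the same argument.
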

\begin{proof}
By Lemma \ref{lema 1,1}, $\rho_Q\in\sch_{\l_0}$ if and only if the $1,1$ entry of $|2Q-I|$ is diagonalizable, which is equivalent to $S=\sin(X)$ being  diagonalizable. This happens  if and only if $X$ is diagonalizable. Note that $QQ^*=\left(\begin{array}{cc} 1+AA^* & 0 \\ 0 & 0 \end{array}\right)$. Using Halmos model, one gets that $QQ^*$ is unitarily equivalent to
$$
I \oplus I \oplus \left(\begin{array}{cc} S^{-2} & 0 \\ 0 & 0 \end{array}\right)
$$
in the orthogonal decomposition $\l_0\cap N(Q)^\perp \oplus \l_0^\perp \cap N(Q) \oplus \h_0$.
It follows that $S$ is diagonalizable if and only if $QQ^*$ is diagonalizable, which happens if and only if $AA^*$ is diagonalizable. 
\end{proof}

\begin{rem}
One can compute the form of $\rho_Q$ in the Halmos decomposition (induced by $R(Q)=\l_0$ and $N(Q)$.
In the non generic part $\l_0\cap N(Q)^\perp \oplus \l_0^\perp \cap N(Q)$, $\rho_Q$ is given by $I\oplus -I$.
In the generic part $\h_0$ it is given by
$$
\rho_Q|_{\h_0}=|2Q_0-I|(2Q_0-I)=\left(\begin{array}{cc} S & -C \\ -C & (1+C^2)S^{-1} \end{array}\right)\left(\begin{array}{cc} 1 & -2CS^{-1} \\ 0 & -1 \end{array}\right)=\left(\begin{array}{cc} S & -C \\ -C & -S \end{array}\right).
$$
\end{rem}

\section{Corners and dilations of contractions}
An arbitrary contraction $A:\l_0\to\l_0$ can be obtained as the $1,1$ entry of a unitary operator in a bigger Hilbert space. 
We start by considering the following elementary construction in $\l_0\times\l_0$, which is well known, and was described, for instance, in \cite{halmos brasil},
$$
V_A=\left( \begin{array}{cc} A & (1-AA^*)^{1/2} \\ (1-A^*A)^{1/2} & -A^* \end{array} \right).
$$
First note that if $A=A^*$, then $V_A$ is selfadjoint, i.e. a symmetry, and thus diagonalizable. This means that the condition $V_A$ diagonalizable does not imply any decomposition property for $A$. We have the following:
\begin{prop}\label{jva}
$A$ is Schmidt-decomposable if and only $\left(\begin{array}{cc} 0 & 1 \\ 1 & 0 \end{array} \right) V_A$ is diagonalizable.
\end{prop}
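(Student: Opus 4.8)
The plan is to put $J=\left(\begin{array}{cc} 0 & 1 \\ 1 & 0\end{array}\right)$ and $M=JV_A$, and to observe that $M$ is unitary, being the product of the symmetry $J$ and the unitary dilation $V_A$; in particular $M$ is normal. First I would write $M$ out explicitly,
$$
M=\left(\begin{array}{cc} (1-A^*A)^{1/2} & -A^* \\ A & (1-AA^*)^{1/2}\end{array}\right),
$$
and compute its imaginary part. The decisive observation is the pair of identities
$$
\mbox{Im}(M)=\frac{1}{2i}(M-M^*)=\left(\begin{array}{cc} 0 & iA^* \\ -iA & 0\end{array}\right), \qquad \mbox{Im}(M)^2=\left(\begin{array}{cc} A^*A & 0 \\ 0 & AA^*\end{array}\right),
$$
so that $A^*A$ and $AA^*$ appear on the diagonal of the square of the selfadjoint operator $\mbox{Im}(M)$.

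For the implication ``$M$ diagonalizable $\Rightarrow A$ Schmidt-decomposable'', suppose $M=\sum_n\lambda_n e_n\otimes e_n$ for an orthonormal basis $\{e_n\}$. Then $\mbox{Im}(M)=\sum_n \mbox{Im}(\lambda_n)\,e_n\otimes e_n$ is diagonalized by the same basis, hence so is $\mbox{Im}(M)^2$, and therefore $A^*A$ is diagonalizable; by the standard characterization this is equivalent to $A$ being Schmidt-decomposable. For the converse I would start from a singular value decomposition $A=\sum_n s_n\,\phi_n\otimes\psi_n$ (with $s_n>0$) and diagonalize $M$ directly: a short computation shows that each two-dimensional subspace spanned by $\psi_n\oplus 0$ and $0\oplus\phi_n$ is $M$-invariant, with $M$ acting there as the $2\times 2$ unitary $\left(\begin{array}{cc} (1-s_n^2)^{1/2} & -s_n \\ s_n & (1-s_n^2)^{1/2}\end{array}\right)$, which is diagonalizable; moreover $M$ fixes every vector of $\ker A\oplus 0$ and of $0\oplus\ker A^*$. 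Collecting the resulting eigenvectors yields an orthonormal basis of $\l_0\oplus\l_0$ diagonalizing $M$.

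The step I expect to require the most care is the bookkeeping in the converse: one must check that the eigenvectors produced actually span $\l_0\oplus\l_0$, which amounts to noting that $\{\psi_n\}$ together with an orthonormal basis of $\ker A$ exhaust the first copy of $\l_0$ (since $\{\psi_n\}$ is a basis of $(\ker A)^\perp$), and symmetrically that $\{\phi_n\}$ with a basis of $\ker A^*$ exhaust the second. The remaining ingredients are standard and carry the rest: that a selfadjoint operator is diagonalizable exactly when its square is, and that $A$ is Schmidt-decomposable if and only if $A^*A$ is diagonalizable. A more streamlined, coordinate-free route to the same conclusion is available because $M$ is normal: its real part $\mbox{Re}(M)=\mbox{diag}\big((1-A^*A)^{1/2},(1-AA^*)^{1/2}\big)$ and its imaginary part commute, so $M$ is diagonalizable iff both are, and each of them is diagonalizable precisely when $A^*A$ (equivalently $A$) is Schmidt-decomposable, the case of $\mbox{Im}(M)$ again passing through $\mbox{Im}(M)^2$.
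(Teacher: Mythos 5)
Your proof is correct and follows essentially the same route as the paper: both arguments use that $JV_A$ is unitary, hence normal, so its selfadjoint and anti-selfadjoint parts commute and are simultaneously diagonalizable (the paper reads off $(1-A^*A)^{1/2}$ from the real part where you square the imaginary part to get $A^*A$ -- a cosmetic difference), and both prove the converse by splitting $\l_0\times\l_0$ into the fixed subspace $\ker A\times\ker A^*$ plus the two-dimensional $JV_A$-invariant blocks spanned by $\psi_n\oplus 0$ and $0\oplus\phi_n$, on which $JV_A$ acts as a rotation by the singular value data. No gaps.
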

\begin{proof}
Denote $J=\left(\begin{array}{cc} 0 & 1 \\ 1 & 0 \end{array} \right)$ and suppose that  $JV_A$ is diagonalizable. Note that 
\begin{equation}\label{matriz jva}
J V_A
=\left(\begin{array}{cc} (1-A^*A)^{1/2} & 0 \\ 0 & (1-AA^*)^{1/2} \end{array} \right)+\left(\begin{array}{cc} 0 & -A^* \\ A & 0 \end{array} \right),
\end{equation}
where the left hand matrix is selfadjoint and the right hand matrix is anti-selfadjoint.  They are the real and imaginary parts of  $JV_A$, which is a unitary operator, and therefore normal. Therefore these two terms above commute. Since $JV_A$ is diagonalizable, its  real and imaginary parts are simultaneously diagonalizable. Thus, in particular, $(1-A^*A)^{1/2}$ is diagonalizable in $\l_0$. Then $A^*A$ is diagonalizable, and $A$ has a singular value decomposition.

Conversely, let $A=\sum_{n\ge 1} s_n \eta_n\otimes\xi_n$ be a singular value decomposition for $A$, with $\{\eta_n\}$ and $\{\xi_n\}$ orthonormal systems, which span $N(A)^\perp$ and $\overline{R(A)}=N(A^*)^\perp$, respectively. On
$$
N(A)\times \{0\} \ \oplus \ \{0\} \times N(A^*),
$$
the operator $JV_A$ is the identity. So it suffices to consider $JV_A$ on the orthogonal complement of this subspace (in $\l_0\times\l_0$), which is clearly an invariant subspace for $JV_A$. The complement of this subspace is $N(A)^\perp\times N(A^*)^\perp$. For each $n\ge 1$, denote by $\mathbb{S}_n$ the $2$-dimensional space generated by the (orthonormal) pair $v_n=\left(\begin{array}{c} \eta_n \\ 0 \end{array}\right)$,  $w_n=\left(\begin{array}{c} 0 \\ \xi_n \end{array}\right)$. Note that 
$$
JV_Av_n=(1-s_n^2)^{1/2}v_n  +s_n w_n \  \hbox{ and } \ JV_Aw_n=-s_n v_n +(1-s_n^2)^{1/2} w_n.
$$ 
That is, $\mathbb{S}_n$ is invariant for $JV_A$. Thus $JV_A$ is diagonalizable in each block $\mathbb{S}_n$, and therefore also on $\oplus_{n\ge 1} \mathbb{S}_n=N(A)^\perp\times N(A^*)^\perp$.
\end{proof}
\begin{rem}\label{52}
In the above situation ($A$ decomposable with singular values $s_n$), the eigenvalues of of $JV_A$ in $N(A)^\perp\times N(A^*)^\perp$ are $(1-s_n^2)^{1/2}+ i\ s_n$ and  $(1-s_n^2)^{1/2} - i\ s_n$, each one with the same multiplicity as $s_n$.
\end{rem}
Let us consider  now the unitary dilation constructed by B. Sz-Nagy and C. Foias \cite{nagyfoias}. We recall this construction. As above, let $A$ be a contraction in $\l_0$. Consider the Hilbert space
$$
{\bf H}:=\bigoplus_{n\in\mathbb{Z}} \l_n,
$$
where $\l_n$ is a copy of $\l_0$, and the subspace $\l_0\subset {\bf H}$ stands as the center summand ($n=0$). Then every operator $T$  in ${\bf H}$ can be regarded as a matrix $(T_{i,j})$, $i,j\in\mathbb{Z}$.  The dilation by Nagy and Foias is the unitary operator $U_A=(U_{i,j})$,  whose matrix entries are given by
$$
U_{0,0}=A , \ U_{0,1}=D_{A^*}, \ U_{-1,0}=D_A , \ U_{-1,1}=-A^*, 
$$
$U_{i,i+1}=I_{\l_0}$ for $i\ne 0,1$, and $U_{i,j}=0$ for all other $i,j\in \mathbb{Z}$, where $D_T$ denotes the defect operator $(1-T^*T)^{1/2}$.

Denote by ${\bf S}$ the bilateral  shift (with multiplicity $\dim \l_0$) of ${\bf H}$: 
$$
{\bf S}\sum_{j\in\mathbb{Z}}\xi_j=\sum_{j\in\mathbb{Z}} \xi_{j+1}.
$$ 

\begin{teo}
Let $A$ be a contraction in $\l_0$, and $U_A$  the Nagy-Foias dilation  of $A$. Then $A$ is Schmidt decomposable  if and only if
${\bf S}U_A$ is diagonalizable.
\end{teo}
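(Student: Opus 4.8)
The plan is to reduce the statement to Proposition \ref{jva} by computing the bi-infinite matrix of ${\bf S}U_A$ and observing that it splits as an identity operator plus a single copy of the $2\times 2$ block $JV_A$, where $J=\left(\begin{array}{cc} 0 & 1 \\ 1 & 0 \end{array}\right)$ is the flip appearing in Proposition \ref{jva}. Once that structural fact is in hand, the theorem follows because diagonalizability of a direct sum with an identity summand is governed entirely by the nontrivial block.

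First I would write every operator on ${\bf H}=\bigoplus_{n\in\mathbb{Z}}\l_n$ as a matrix $(T_{i,j})_{i,j\in\mathbb{Z}}$ and record how ${\bf S}$ acts. Since ${\bf S}$ carries the summand $\l_n$ isometrically onto $\l_{n+1}$, it has the identity $I_{\l_0}$ on the relevant off-diagonal, and left multiplication by ${\bf S}$ merely re-indexes the rows of $U_A$: a direct computation gives $({\bf S}U_A)_{i,j}=(U_A)_{i-1,j}$. Inserting the defining entries of $U_A$ (namely $U_{0,0}=A$, $U_{0,1}=D_{A^*}$, $U_{-1,0}=D_A$, $U_{-1,1}=-A^*$, and $U_{k,k+1}=I_{\l_0}$ for $k\ne -1,0$), one checks that for every index $i\ne 0,1$ the only surviving entry of the $i$-th row is $({\bf S}U_A)_{i,i}=(U_A)_{i-1,i}=I_{\l_0}$, while the rows and columns with indices $0,1$ assemble into
$$
\left(\begin{array}{cc} ({\bf S}U_A)_{0,0} & ({\bf S}U_A)_{0,1} \\ ({\bf S}U_A)_{1,0} & ({\bf S}U_A)_{1,1} \end{array}\right)=\left(\begin{array}{cc} D_A & -A^* \\ A & D_{A^*} \end{array}\right)=JV_A.
$$
Thus, relative to the orthogonal reducing decomposition ${\bf H}=\left(\bigoplus_{n\ne 0,1}\l_n\right)\oplus(\l_0\oplus\l_1)$, one obtains
$$
{\bf S}U_A=\left(\bigoplus_{n\ne 0,1} I_{\l_n}\right)\oplus JV_A,
$$
with the block $JV_A$ acting on $\l_0\oplus\l_1\simeq\l_0\times\l_0$.

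Finally I would argue that ${\bf S}U_A$ is diagonalizable if and only if its restriction $JV_A$ is. The identity summand is trivially diagonalizable, so if $JV_A$ admits an orthonormal eigenbasis one diagonalizes ${\bf S}U_A$ by adjoining any orthonormal basis of $\bigoplus_{n\ne 0,1}\l_n$; conversely, since the displayed decomposition reduces the unitary (hence normal) operator ${\bf S}U_A$, its eigenprojections commute with the projection onto $\l_0\oplus\l_1$, so diagonalizability passes to the restriction $JV_A$. Proposition \ref{jva} then identifies this last condition with $A$ being Schmidt decomposable, finishing the proof. The main point to get right is the index bookkeeping for the shift, so that the central block of $U_A$ is rotated into the on-diagonal $2\times 2$ block $JV_A$ and the two semi-infinite tails collapse to identities; the passage of diagonalizability to and from a reducing summand is routine but worth stating for normal operators.
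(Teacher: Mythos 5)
Your proof is correct and follows essentially the same route as the paper: both reduce ${\bf S}U_A$ to the block-diagonal form $I\oplus JV_A\oplus I$ on $\bigoplus_{n\ne 0,1}\l_n\oplus(\l_0\oplus\l_1)$ and then invoke Proposition \ref{jva}. Your extra care with the shift's index bookkeeping and with passing diagonalizability to and from a reducing summand only makes explicit what the paper leaves as ``an elementary matrix computation.''
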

\begin{proof}
An elementary matrix computation, shows that ${\bf S}U_A$, in the decomposition
$$
{\bf H}=\bigoplus_{j<0}\l_j \oplus \left(\l_0\oplus\l_1\right)  \oplus \bigoplus_{j>1}\l_j
$$
has the block-diagonal form
$$
{\bf S}U_A=I\oplus N_A \oplus I,
$$
where $N_A=\left(\begin{array}{cc} D_A & -A^* \\ A & D_{A^*} \end{array} \right)$. Note that $N_A$ coincides with $JV_A$ in (\ref{matriz jva}). Thus the proof follows applying Proposition \ref{jva}.
\end{proof}
 
\section{Multiplication by continuous unimodular  functions}

Recall Example \ref{ejemplos}.1: $\h=L^2(\mathbb{T})$, $\l_0=H^2(\mathbb{T})$ and $U=M_\varphi$ for $\varphi:\mathbb{T}\to\mathbb{T}$ continuous. As seen $M_\varphi\in\sch_{H^2(\mathbb{T})}$. In particular, to $\varphi$ corresponds a sequence $\{s_n(\varphi)=s_n\}$ of real numbers $0<s_n\le 1$, namely, the singular values of the Toeplitz operator with symbol $\varphi$. 

 If $\varphi$ is analytic in $\mathbb{D}$, then it must be a finite Blaschke product. In particular $P_0U|_{\l_0}=M_\varphi|_{H^2(\mathbb{T})}$ is an isometry in $H^2(\mathbb{T})$. Then its singular values are  the sequence $s_n=1$.

The first non trivial case would be to consider a rational continuous unimodular function, i.e. $\varphi=B_\aa / B_\bb$, where $\aa=\{a_1,\dots, a_n\}$ and $\bb=\{b_1,\dots, b_m\}$ are finite sequences of zeros, and 
$B_\aa$ , $B_\bb$ are the corresponding Blaschke products,
$$
B_\aa(z)=\prod_{j=i}^n \frac{z-a_j}{1-\bar{a}_j z}, \ B_\bb=\prod_{k=i}^m \frac{z-b_k}{1-\bar{b}_k z}.
$$
Assume that  $n=m$, $a_j\ne a_k$ if $j\ne k$, and the same for $\bb$ (w.l.o.g. $a_j\ne b_k$). 
Denote $\aa\bb=\{a_1,\dots, a_n,b_1,\dots,b_n\}$. We want to characterize the singular values of $T_\varphi$, or equivalently, of $P_{\l_0}M_\varphi P_{\l_0}M_{\varphi^{-1}}$. Note that 
$$
P_{\l_0}M_\varphi P_{\l_0} M_{\varphi^{-1}}=M_{B_\bb^{-1}}\{ M_{B_\bb} P_{\l_0} M_{B_\bb^{-1}} M_{B_\aa}P_{\l_0}M_{B_\aa^{-1}}\} M_{B_\bb},
$$
and thus one can compute the singular values of 
$$
M_{B_\bb} P_{\l_0} M_{B_\bb^{-1}} M_{B_\aa}P_{\l_0}M_{B_\aa^{-1}}= P_{B_\bb\l_0} P_{B_\aa\l_0}.
$$
Clearly, we  need to compute  the generic part of the subspaces $B_\aa\l_0$ and $B_\bb\l_0$. 
Clearly $\h_{11}=B_{\aa\bb}\l_0$, and
$$
\h_{00}=B_\aa \l_0^\perp \cap B_\bb \l_0^\perp= \{B_\aa \l_0 \vee B_\bb \l_0\}^\perp= (H^2(\TT))^\perp=\l_0^\perp,
$$ 
because $B_\aa$ and $B_\bb$ are {\it co-prime} inner functions (see \cite{garciaross}).
Then
$$
\h_{01}=\h_{10}=\{0\}.
$$
Indeed, if $f\in B_\aa\cap B_\bb^\perp$, $f=g B_\aa=\sum_{j=1}^n \beta_j c_{b_j}$, where $\beta_j\in\mathbb{C}$ and  $c_b$ denotes the Szego kernel 
$$
c_b(z)=\displaystyle{\frac{1}{1-\bar{b}z}}.
$$
Thus $f$ can be written as a rational function $f(z)=\frac{p(z)}{\prod_{j=1}^n (1-\bar{b}_jz)}$, with $p$ of degree $n-1$. On the other hand, $f$ has $n$ different zeros, and thus $f=0$.
$$
\h'=H^2(\TT)\ominus B_{\aa\bb} H^2(\TT),
$$ 
the model space, usually denoted $\k_{B_{\aa\bb}}$. The space $\k_{B_{\aa\bb}}$ is generated by the (non orthogonal) functions Szego kernels 
$c_{a_j}, c_{b_k}$, $j,k=1,\dots, n$.

The reduction $\h'_\aa$ of $B_\aa\l_0$ to the generic part $\h'=\k_{\aa\bb}$ consists of the functions with vanish at $a_1,\dots,a_n$.
Therefore they are orthogonal to $c_{a_1}(z),\dots,c_{a_n}(z)$.  Thus 
$$
(\h'_\aa)^\perp=\langle c_{a_j}: j=1,\dots,n\rangle=\k_\aa \ \hbox{ and } \ (\h'_\bb)^\perp=\langle c_{b_k}: k=1,\dots,n\rangle=\k_\bb .
$$

Note that the singular values of $P_{\h'_\aa}P_{\h'_\bb}$ are strictly less than one . Indeed, in the generic part, since the intersection of the subspaces is trivial, the singular values are strictly less than $1$ (see Remark \ref{pq}.1). Therefore we can consider instead the singular values of $P^\perp_{\h'_\aa}P^\perp_{\h'_\bb}$.

For instance
\begin{ejem}
If  $n=2$ (and $\dim\k_{\aa\bb}=4$), $\h_\aa^\perp=\langle c_{a_1},c_{a_2}\rangle$, $\h_\bb^\perp=\langle c_{b_1},c_{b_2}\rangle$, the squares of these singular values are the eigenvalues of the (symmetric) matrix
$$
\left( \begin{array}{cc} \displaystyle{\frac{|\langle u_1, v_1\rangle |^2}{\|u_1\|^2\|v_1\|^2}+  \frac{|\langle u_1, v_2\rangle |^2}{\|u_1\|^2\|v_2\|^2}}   &  
\displaystyle{\frac{\langle u_1, v_1\rangle \langle v_1,u_2\rangle}{\|v_1\|^2\|u_1\|\|u_2\|}+\frac{\langle u_1, v_2\rangle \langle v_2, u_2\rangle}{\|v_2\|^2\|u_1\|\|u_2\|}} \\ 
\displaystyle{\frac{\langle v_1, u_1\rangle \langle u_2, v_1\rangle}{\|v_1\|^2\|u_1\|\|u_2\|}+\frac{\langle v_2, u_1\rangle \langle u_2, v_2\rangle}{\|v_2\|^2\|u_1\|\|u_2\|}}  & 
\displaystyle{ \frac{|\langle u_2, v_1\rangle |^2}{\|u_2\|^2\|v_1\|^2}+  \frac{|\langle u_2, v_2\rangle |^2}{\|u_2\|^2\|v_2\|^2}} \end{array} \right)
$$
where $u_1(z)=c_{a_1}(z)$ and $u_2(z)=c_{a_2}(z)-\frac{1-|a_1|^2}{1-\bar{a}_1a_2}c_{a_1}(z)$ is the Gram-Schmidt orthonormalization of $c_{a_1}, c_{a_2}$, and similarly $v_1(z)=c_{b_1}(z)$, and $v_2(z)=c_{b_2}(z)-\frac{1-|b_1|^2}{1-\bar{b}_1b_2}c_{b_1}(z)$ is the orthonormalization of $c_{b_1}, c_{b_2}$. Note that the inner products (and norms) involving the Szego kernels can be explicitly computed. 
\end{ejem}

In general, one can say the following:
\begin{prop}
Let $\aa$ and $\bb$ as above, and  
$Q=P_{(\h_\aa')^\perp\parallel (\h_\bb')^\perp}$, which is the (non orthogonal) projection  
$$
\k_{\aa\bb}=\langle c_{a_1},\dots,c_{a_n}\rangle \oplus \langle c_{b_1},\dots,c_{b_n}\rangle\to \langle c_{a_1},\dots,c_{a_n}\rangle\subset \k_{\aa\bb} \ \ , f\oplus g \mapsto f.
$$
Then the singular values $s_n$ of $P_0M_{B_\aa / B_\bb}|_{\l_0}$ which are strictly less than one are finite, and  are 
$$
s_n=\displaystyle{\frac{\beta_n}{\sqrt{\beta_n^2+1}}} \ , \ n=1,\dots,n
$$
where $\beta_n$ are the singular values of $Q$.
\end{prop}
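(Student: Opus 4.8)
The plan is to push the singular values of $T_\varphi=P_0M_\varphi|_{\l_0}$ through the reductions set up above until the question becomes an instance of Proposition~\ref{lema 41}. Since $\varphi$ is unimodular, $\varphi^{-1}=\bar\varphi$, so the squares of the singular values of $T_\varphi$ are the non-zero eigenvalues of $P_0M_\varphi P_0M_{\varphi^{-1}}$. By the factorization already displayed, conjugation by the unitary $M_{B_\bb}$ carries this operator onto $P_{B_\bb\l_0}P_{B_\aa\l_0}$; hence the singular values of $T_\varphi$ are exactly the singular values of the product of the two orthogonal projections $P_{B_\bb\l_0}$ and $P_{B_\aa\l_0}$.

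Next I would localize these numbers in the generic part. The Halmos decomposition of the pair $(B_\bb\l_0,B_\aa\l_0)$ has already been computed: $\h_{11}=B_{\aa\bb}\l_0$, $\h_{00}=\l_0^\perp$, $\h_{10}=\h_{01}=\{0\}$, and generic part $\h'=\k_{\aa\bb}$. On $\h_{11}$ the product of the two projections is the identity, producing singular values $1$ (by Proposition~\ref{pq}.1), while on $\h_{00}\oplus\h_{10}\oplus\h_{01}$ it vanishes; on $\h'$ all singular values are strictly below one, since the reduced subspaces $\h'_\aa$, $\h'_\bb$ meet trivially. Hence the singular values of $P_{B_\bb\l_0}P_{B_\aa\l_0}$ strictly less than one are exactly those of the reduced product $P_{\h'_\bb}P_{\h'_\aa}$, and as $\dim\k_{\aa\bb}=2n<\infty$ there are finitely many. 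By Proposition~\ref{pq}.3 the complementary product $P_{(\h'_\bb)^\perp}P_{(\h'_\aa)^\perp}$ has the same singular values below one, and taking adjoints these coincide with the singular values of $P_{\k_\aa}P_{\k_\bb}=P_{(\h'_\aa)^\perp}P_{(\h'_\bb)^\perp}$.

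Finally I would apply Proposition~\ref{lema 41} with $\s=\k_\aa$ and $\t=\k_\bb$. Because $\h'_\aa$ and $\h'_\bb$ lie in generic position inside $\k_{\aa\bb}$, their orthocomplements satisfy $\k_\aa\cap\k_\bb=\{0\}$, and together with $\dim\k_\aa+\dim\k_\bb=2n=\dim\k_{\aa\bb}$ this gives $\k_\aa\dot{+}\k_\bb=\k_{\aa\bb}$; thus the oblique projection $Q=P_{\k_\aa\parallel\k_\bb}$ is exactly the one in the statement. Proposition~\ref{lema 41} then yields that the singular values $s_n<1$ of $T_\varphi$ are $\beta_n/\sqrt{\beta_n^2+1}$, where $\beta_n$ are the singular values of $B=P_{\k_\aa}Q|_{\k_\aa^\perp}$; since $R(Q)\subset\k_\aa$ and the orthocomplement of $\k_\aa$ inside $\k_{\aa\bb}$ is $\h'_\aa$, this $B$ is simply $Q|_{\h'_\aa}$, the oblique part of $Q$ whose singular values the statement records.

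The step demanding most care is the bookkeeping in the middle: one must check that reducing to the generic part removes exactly the singular value $1$ and nothing strictly smaller, and that both the orthocomplement replacement and the interchange of factors preserve, with multiplicity, the list of singular values strictly below one. Once this is secured, verifying $\k_\aa\dot{+}\k_\bb=\k_{\aa\bb}$ and feeding $\s=\k_\aa$, $\t=\k_\bb$ into Proposition~\ref{lema 41} is routine.
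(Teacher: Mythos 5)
Your proposal is correct and follows essentially the same route as the paper: the paper's proof is the single line ``Use Proposition~\ref{lema 41}'', with all the reductions you spell out (passing to $P_{B_\bb\l_0}P_{B_\aa\l_0}$, localizing to the generic part $\k_{\aa\bb}$, trading $P_{\h'_\aa}P_{\h'_\bb}$ for $P_{\k_\aa}P_{\k_\bb}$ via Proposition~\ref{pq}.3, and checking $\k_\aa\dot{+}\k_\bb=\k_{\aa\bb}$) left implicit in the surrounding discussion of Section~8. You also correctly resolve the slight looseness in the statement by reading ``the singular values of $Q$'' as the singular values of the corner $B=Q|_{\h'_\aa}$, which is what Proposition~\ref{lema 41} actually requires.
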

\begin{proof}
Use Lemma \ref{lema 41}
\end{proof}

Esteban Andruchow \\
Instituto de Ciencias,  Universidad Nacional de Gral. Sarmiento,
\\
J.M. Gutierrez 1150,  (1613) Los Polvorines, Argentina
\\ 
and
\\
 Instituto Argentino de Matem\'atica, `Alberto P. Calder\'on', CONICET, 
\\
Saavedra 15 3er. piso,
(1083) Buenos Aires, Argentina.
\\
e-mail: eandruch@ungs.edu.ar

\end{document}